\documentclass[10pt]{amsart}

\usepackage{color,graphicx,shortvrb}
\usepackage[latin 1]{inputenc}

\usepackage[bookmarksnumbered, colorlinks, plainpages]{hyperref}
\hypersetup{colorlinks=true,linkcolor=red, anchorcolor=green, citecolor=cyan, urlcolor=red, filecolor=magenta, pdftoolbar=true}

\usepackage{enumerate}
\usepackage{amssymb}
\usepackage{tikz-cd}

\usepackage [ all ]{xy}

\newtheorem{theorem}{Theorem}[section]

\newtheorem{corollary}[theorem]{Corollary}

\newtheorem{lemma}[theorem]{Lemma}

\newtheorem{proposition}[theorem]{Proposition}
\newtheorem{remark}[theorem]{Remark}

\def\J#1#2#3{ \left\{ #1,#2,#3 \right\} }

\def\11{\textbf{$1$}}
\def\CC{{\mathbb{C}}}

\DeclareGraphicsExtensions{.jpg,.pdf,.png,.eps}

%%usepackage{showkeys}
%%%%%%%%%%%%%%%%%%%%%%%%%%%%%%%%%%%%%%%%%%%%%%%%%%%%%%%%%%%%%%%%%%%%%%%%%%%%%%%%%%%%%%%%%%%%%%%%%%%

%\usepackage[active]{srcltx} %SRC Specials for DVI search
%%%%%%%%%%%%%%%%%%%%%%%%%%%%%%%%%%%%%%%%%%%%%%%%%%%%%%%%%%%%%%%%%%%%%%%%%%%%%

\begin{document}

\title[On the extension problem]{On the extension of surjective isometries whose domain is the unit sphere of a space of compact operators}

\author[A.M. Peralta]{Antonio M. Peralta}

\address{Departamento de An{\'a}lisis Matem{\'a}tico, Facultad de
Ciencias, Universidad de Gra\-na\-da, 18071 Granada, Spain.}
\email{aperalta@ugr.es}

%\thanks{}

\subjclass[2010]{46A22, 46B20, 47B49, 46B04, 17C65, 46L05}

\keywords{Tingley's problem; Mazur--Ulam property; extension of isometries; compact operators, compact C$^*$-algebras}

\date{}

\begin{abstract}
We prove that every surjective isometry from the unit sphere of the space $K(H),$ of all compact operators on an arbitrary complex Hilbert space $H$, onto the unit sphere of an arbitrary real Banach space $Y$ can be extended to a surjective real linear isometry from $K(H)$ onto $Y$. This is probably the first example of an infinite dimensional non-commutative C$^*$-algebra containing no unitaries and satisfying the Mazur--Ulam property. We also prove that all compact C$^*$-algebras and all weakly compact JB$^*$-triples satisfy the Mazur--Ulam property.
\end{abstract}

\maketitle
\thispagestyle{empty}

\section{Introduction}

The problem of extending surjective isometries between the unit spheres of two Banach spaces has experienced a substantial turn with the introduction, by M. Mori and N. Ozawa, of the so-called strong Mankiewicz property. The celebrated Mazur--Ulam theorem has been a source of inspiration for many subsequent research. A key piece among the different generalizations that appeared later is due to P. Mankiewicz \cite{Mank1972}. Let us recall that a convex subset $K$ of a normed space $X$ is called a \emph{convex body} if it has non-empty interior in $X$. P. Mankiewicz proved in \cite{Mank1972} that every surjective isometry between convex bodies in two arbitrary normed spaces can be uniquely extended to an affine function between the spaces. M. Mori and N. Ozawa introduced the strong Mankiewicz property in \cite{MoriOza2018}. According to the just quoted paper, a convex subset $K$ of a normed space $X$ satisfies the \emph{strong Mankiewicz property} if every surjective isometry $\Delta$ from $K$ onto an arbitrary convex subset $L$ in a normed space $Y$ is affine. It is established by Mori and Ozawa that for a Banach space $X$ satisfying that the closed convex hull of the extreme points, $\partial_e (\mathcal{B}_X),$ of the closed unit ball, $\mathcal{B}_X$, of $X$ has non-empty interior in $X$, every convex body $K\subset X$ satisfies the strong Mankiewicz property (see \cite[Theorem 2]{MoriOza2018}). By the Russo-Dye theorem every convex body of a unital C$^*$-algebra satisfies the strong Mankiewicz property, and the same property holds for convex bodies in real von Neumann algebras (see \cite[Corollary 3]{MoriOza2018}) and JBW$^*$-triples (cf. \cite[Corollary 2.2]{BeCuFerPe2018}).\smallskip

Based on the strong Mankiewicz property, M. Mori and N. Ozawa proved that unital C$^*$-algebras and real von Neumann algebras are among the spaces satisfying the Mazur--Ulam property, that is, every surjective isometry from the unit sphere, $S(A),$ of a unital C$^*$-algebra, or of a real von Neumann algebra $A$, onto the unit sphere, $S(Y)$, of an arbitrary real Banach space $Y$, admits a unique extension to a surjective real linear isometry between the spaces (see \cite{MoriOza2018}). J. Becerra Guerrero, M. Cueto-Avellaneda, F.J. Fern{\'a}ndez-Polo and the author of this note showed that every JBW$^*$-triple $M$ which is not a Cartan factor of rank two always satisfies the Mazur--Ulam property (cf. \cite{BeCuFerPe2018}). In a recent collaboration with O.F.K. Kalenda we prove that every rank-2 Cartan factor satisfies the Mazur--Ulam property \cite[Theorem 1.1]{KalPe2019}, and consequently, every JBW$^*$-triple enjoys the Mazur--Ulam property \cite[Corollary 1.2]{KalPe2019}. These results simply are the most recent advances of a long list of paper studying the extension of isometries between the unit spheres of two Banach spaces (see, for example, \cite{%Bec2019,
 CabSan19, CuePer18, CuePer19, FerJorPer2018, FerPe17c, FerPe17b, FerPe17d, FerPe18Adv, JVMorPeRa2017, Mori2017, MoriOza2018, Pe2019, PeTan16, Ting1987, WH19} and the surveys \cite{YangZhao2014, Pe2018}). \smallskip

So the natural question is: what can we say in the case of Banach spaces or C$^*$-algebras whose closed unit ball contains no extreme points? This is the case of the space $K(H)$, of all compact operators on an infinite dimensional complex Hilbert space $H$, where $\partial_e(\mathcal{B}_{K(H)}) = \emptyset.$ The lacking of extreme points of the closed unit ball makes impossible a straight application of the arguments based on the strong Mankiewicz property.\smallskip

In \cite{PeTan16},  R. Tanaka and the author of this note proved that every surjective isometry between the unit spheres of two compact C$^*$-algebras (in particular between the unit spheres of two spaces of compact linear operators on a complex Hilbert space) extends (uniquely) to a surjective real linear isometry between the two C$^*$-algebras. In collaboration with F.J. Fern{\'a}ndez-Polo we showed that the same conclusion remains valid for a surjective isometry between the unit spheres of two complex Banach spaces in the strictly wider class of weakly compact JB$^*$-triples (cf. \cite{FerPe18Adv}). The reader can get access to the concrete definitions of these objects in the subsequent subsection \ref{subsec:background}.
In this paper we prove that weakly compact JB$^*$-triples satisfy a much stronger property, namely, every weakly compact JB$^*$-triple $E$ satisfies the Mazur--Ulam property, in equivalent words, every surjective isometry from the unit sphere of $E$ onto the unit sphere of an arbitrary Banach space $Y$ extends to a surjective real linear isometry from $E$ onto $Y$ (cf. Theorem \ref{t weakly compact JB*-triples satisfy MUP}).\smallskip

Our strategy to solve the problem determines the structure of this note. In Section \ref{sec:new facial properties} we gather some new results derived from our knowledge on the facial structure of elementary JB$^*$-triples. New technical properties of elementary JB$^*$-triples, established in Propositions \ref{p new geometric facial property} and \ref{p first consequence from the new geometric facial property}, are applied to deduce that every surjective isometry $\Delta: S(K) \to S(Y),$ where $K$ is an elementary JB$^*$-triple and $Y$ is a real Banach space, is affine on every non-empty convex subset $\mathcal{C}\subset S(K)$ (cf.  Corollary \ref{c affine on convex subsets of the sphere in reflexive}).\smallskip

Section \ref{sec: elementary JB* triples satisfy MUP} is aimed to prove that every elementary JB$^*$-triple satisfies the Mazur--Ulam property (see Theorem \ref{t elementary JBstar triples satisfy the MUP}). In particular, for any complex Hilbert space $H$, the space $K(H)$ satisfies the Mazur--Ulam property (cf. Corollary \ref{c KH satisfies the MUP}). This closes a natural question which remained open until now. Let us observe that in case that $H$ is infinite dimensional the closed unit ball of $K(H)$ lacks of extreme points. As shown in \cite{JVMorPeRa2017}, $c_0$ satisfies the Mazur--Ulam property. Probably, the results in this note show the first example of a non-commutative non-unital C$^*$-algebra satisfying this property.\smallskip

As we know from many other mathematical problems, certain questions are easier to answer from a more general point of view. This is the case of the Mazur--Ulam property for the space of compact operators; the arguments in the wider class of elementary and weakly compact JB$^*$-triples are probably more abstract but offer an accesible proof.\smallskip

Our goal in the second part of the paper is to prove that every weakly compact JB$^*$-triple satisfies the Mazur--Ulam property (see Theorem \ref{t weakly compact JB*-triples satisfy MUP}). For this purpose, we shall first show that the closed unit ball of any weakly compact JB$^*$-triple enjoys the strong Mankiewicz property (cf. Corollary \ref{c weakly compact JB* triples satisfy the SMP}). A consequence of this second main result, which is worth to be stated by its own importance, shows that every compact C$^*$-algebra (that is, a C$^*$-algebra which coincides with a $c_0$-sum of spaces of compact operators) also has the Mazur--Ulam property (cf. Corollary \ref{c compact C*-algebras satisfy MUP final}).

\section{Basic background and references}\label{subsec:background}

The Riemann mapping theorem is one of the best known results in the theory of holomorphic functions of one variable. As it was already observed by H. Poincar{\'e} in 1907, for higher dimensional Banach spaces the conclusion of the Riemann mapping theorem is no longer valid, there are lots of simply connected domains which are not biholomorphic to the open unit ball. \emph{Bounded symmetric domains} in finite dimensions were introduced and
completely classified by E. Cartan. L. Harris proved in 1974 that the open unit ball of every C$^*$-algebra is a bounded symmetric domain \cite{Harris74}, a conclusion which remains true for JB$^*$-algebras (see \cite{BraKaUp78}). The most conclusive study was obtained by W. Kaup who
proved that every bounded symmetric domain in a complex Banach space is biholomorphically equivalent to the open unit ball of a
JB$^*$-triple (cf. \cite{Ka83}).\smallskip

A complex Banach space $E$ is called a \emph{JB$^*$-triple} if it can be equipped with a continuous triple product $\J ... : E\times E\times E \to E,$ which is conjugate linear in the middle variable and symmetric and bilinear in the outer variables satisfying the following axioms:
\begin{enumerate}[{\rm $(a)$}] \item (Jordan Identity) $L(a,b) L(x,y) - L(x,y) L(a,b)= L(L(a,b)x,y) - L(x, L(b,a) y),$ for all $a,b,x,y,$ in $E$, where $L(x,y)$ is the linear operator defined by $L(a,b) (z) =\{a,b,z\}$ ($\forall z\in E$);
\item The operator $L(a,a): E\to E$ is hermitian and has non-negative spectrum;
\item $\|\J aaa\| = \|a\|^3$, for every $a\in E$.\end{enumerate}\smallskip

It can be found in the previously mentioned references that every C$^*$-algebra $A$ is a JB$^*$-triple with respect to the triple product \begin{equation}\label{eq triple product on Cstaralg} (a,b,c)\mapsto \{a,b,c\} =1/2 (a b^* c + c b^* a), \ \ (a,b,c\in A).
 \end{equation} This triple product also induces a structure of JB$^*$-triple for the space $B(H_1,H_2)$ of all bounded linear operators between two complex Hilbert spaces $H_1$ and $H_2$, and for every closed subspace of $B(H_1,H_2)$ which is closed for this triple product. In particular every complex Hilbert space and the space $K(H_1,H_2),$ of all compact linear operators from $H_1$ to $H_2,$ are JB$^*$-triples. The class of JB$^*$-triples is also widen with all JB$^*$-algebras when they are equipped with the triple product given by \begin{equation}\label{eq triple product JB*-algebras} \J abc = (a \circ b^*) \circ c + (c\circ b^*) \circ a - (a\circ c) \circ b^*.
 \end{equation}

A subspace $B$ of a JB$^*$-triple $E$ is a JB$^*$-subtriple of $E$ if $\{B,B,B\}\subseteq B$. A JB$^*$-subtriple $I$ of $E$ is called an \emph{inner ideal of $E$} if $\{I,E,I\}\subseteq I$. A subspace $I$ of a C$^*$-algebra $A$ is called an \emph{inner ideal} if $I A I \subseteq I$. For example, if $p$ and $q$ are projections in a C$^*$-algebra $A$, the subspace $I= p A q$ is an inner ideal of $A$. Inner ideals of JB$^*$-triples are studied and characterized in \cite{EdRutt92}.\smallskip

A \emph{JBW$^*$-triple} is a {JB$^*$-triple} which is also a dual Banach space. Every von Neumann algebra is a JBW$^*$-triple. The theory of JB$^*$-triple runs in parallel to the theory of C$^*$-algebras. For example, the second dual of a JB$^*$-triple $E$ is a JBW$^*$-triple under a triple product extending the product of $E$ \cite{Di86}. It is also known that every JBW$^*$-triple admits a unique isometric predual, and its triple product is separately weak$^*$ continuous \cite{BarTi86}. \smallskip

Let us recall that an element $e$ in a C$^*$-algebra $A$ is called a \emph{partial isometry} if $ee^*$ (equivalently, $e^*e$) is a projection. It is known that $e$ is a partial isometry if and only if $ee^* e =e$. It is easy to see that, in terms of the triple product given in \eqref{eq triple product on Cstaralg}, an element $e\in A$ is a partial isometry if and only if $\{e,e,e\}=e$. In the wider framework of JB$^*$-triples, elements satisfying $\{e,e,e\}$ are called \emph{tripotents}. %The set of all tripotents in a JB$^*$-triple $E$ will be denoted by $\mathcal{U}(E)$.
For each tripotent $e\in E$ the eigenvalues of the operator $L(e,e)$ are precisely $0,1/2$ and $1$. For $j\in \{0,1,2\}$, by denoting by $E_j (e)$ the $\frac{j}{2}$-eigenspace of $L(e,e)$, the JB$^*$-triple $E$ decomposes as the direct sum $$E= E_{2} (e) \oplus E_{1} (e) \oplus E_0 (e).$$ This decomposition is called the \emph{Peirce decomposition} of $E$ with respect to the tripotent $e$, and the projection of $E$ onto $E_j(e)$, which is denoted by $P_j(e)$, is called the Peirce $j$-projection (see \cite{Loos2}). It is further known that Peirce projections are contractive (cf. \cite{FriRu85}) and satisfy the following identities $P_{2}(e) = Q(e)^2,$ $P_{1}(e) =2(L(e,e)-Q(e)^2),$ and $P_{0}(e) =Id_E - 2 L(e,e) + Q(e)^2,$ where for each $a\in E$, $Q(a):E\to E$ is the conjugate linear map given by $Q(a) (x) =\{a,x,a\}$. Consequently, in a JBW$^*$-triple Peirce projections are weak$^*$ continuous and Peirce subspaces are weak$^*$ closed.\smallskip

Triple products among Peirce subspaces satisfy certain rules, known as \emph{Peirce rules} or \emph{Peirce arithmetic}, asserting that, for $k,j,l\in \{0,1,2\}$ we have $$\begin{aligned}  \{ E_k(e) , E_j (e), E_l (e)\} &\subseteq E_{k-j+l} (e), \hbox{ if $k-j+l \in\{0,1,2\}$, and } \\
 \{ E_k(e) , E_j (e), E_l (e)\} &= \{0\}\hbox{ otherwise.}
\end{aligned}$$
A tripotent $e$ in $E$ is called \emph{complete} (respectively, \emph{unitary} or \emph{minimal}) if $E_0(E)=\{0\}$  (respectively, $E_2(e)=E$ or $E_2(e)=\CC e \neq \{0\}$).\smallskip

Orthogonality in JB$^*$-triples is another notion required in this note. Elements $a,b$ in a JB$^*$-triple are said to be \emph{orthogonal} (written $a\perp b$) if $L(a,b) =0$. It is known that $a\perp b$ if, and only if, $L(b,a)=0$ if, and only if, $\{a,a,b\} =0$ if, and only if, $\{b,b,a\}=0$ (see \cite[Lemma 1]{BurFerGarMarPe} for more equivalent reformulations). This notion is consistent with the usual concept of orthogonality in C$^*$-algebras. Let $a,b$ be elements in $B(H)$ (or in a general C$^*$-algebra). We say that $a$ and $b$ are \emph{orthogonal} if $ a b^* = b^* a$. It is well known that $\|a\pm b\|= \max\{\|a\|,\|b\|\}$ whenever $a\perp b$ in (see \cite[Lemma 1.3$(a)$]{FriRu85}). For each non-zero finite rank partial isometry $e\in K(H)$ there exists a finite family of mutually orthogonal minimal partial isometries $\{e_1,\ldots, e_m\}$ in $K(H)$ such that $ e = e_1+ \ldots + e_m$.  We say that a tripotent $e$ in a JB$^*$-triple $E$ has \emph{finite rank} if it can be written a sum of finitely many mutually orthogonal minimal tripotents.\smallskip

To make easier our subsequent arguments we remark the following property: let $u$ and $e$ be two orthogonal tripotents in a JB$^*$-triple $E$. Clearly $e+u$ is a tripotent in $E$ and we can easily deduce from Peirce arithmetic that \begin{equation}\label{eq Peirce 2 of an orthogonal sum} E_{2} (e+u) = E_2(e) \oplus E_2(u) \oplus E_1(e) \cap E_1 (u).
\end{equation}

A subset $S$ of a JB$^*$-triple $E$ is called \emph{orthogonal} if $0 \notin S$ and $x \perp y$ for every $x\neq y$ in $S$. The minimal cardinal number $r$ satisfying $card(S) \leq r$ for every orthogonal subset $S \subseteq E$ is called the \emph{rank} of $E$ (cf. \cite{Ka97} and \cite{BeLoPeRo} for basic results on the rank of a Cartan factor and a JB$^*$-triple).\smallskip

Let $B$ be a subset of a JB$^*$-triple $E.$ We shall denote by $B^\perp$ the \emph{(orthogonal) annihilator of $B$} defined by $
B^\perp= B^{\perp}_{_E}:=\{ z \in E : z \perp x , \forall x \in B \}.$ Given a tripotent $e$ in $E$ the inclusions $$ E_2(e) \oplus  E_1(e)\supseteq \{e\}_{_E}^{\perp \perp}= E_0(e)^{\perp} \supseteq E_2(e)$$ always hold (see \cite[Proposition 3.3]{BurGarPe11}). It is also known that the equality $\{e\}_{_E}^{\perp \perp}= E_2(e)$ is not always true. The following counterexample\label{eq counterexample biorthog complement} can be found in \cite[Remark 3.4]{BurGarPe11}: suppose $H_1$ and $H_2$ are two infinite dimensional complex Hilbert spaces and $p$ is a minimal projection in $B(H_1)$. If $E$ denotes the orthogonal sum $p B(H_1) \oplus^{\infty} B(H_2)$ and we consider $e=p$ as an element in $E$, it can be checked that $p$ is a non-complete tripotent in $E$, $\{e\}_{_E}^{\perp} = B(H_2)$ and $\{e\}_{_E}^{\perp\perp} = E_2(e) \oplus E_1(e) =
p B(H_1)\neq \mathbb{C} p = E_2 (e).$\smallskip

Despite of the previous counterexample, if we assume that $E$ is a Cartan factor and $e$ is a non-complete tripotent in $E,$ then the equality $\{e\}^{\perp \perp}=E_0(e)^\perp= E_2(e)$ is always true (see \cite[Lemma 5.6]{Ka97}).\smallskip

This seems an appropriate moment to refresh the definition of Cartan factors. A JB$^*$-triple is called a Cartan factor of type 1 if it is a
JB$^*$-triple of the form $B(H_1, H_2)$, where $H_1$ and $H_2$ are two complex Hilbert spaces. Let $j$ be a conjugation on a complex Hilbert space $H$, and consider the linear involution $x\mapsto x^t:=jx^*j$ on $B(H).$ A Cartan factor of type 2 (respectively,
type 3) is a JB$^*$-triple which coincides with the subtriple of $B(H)$ formed by the $t$-skew-symmetric (respectively, $t$-symmetric) operators. All we need to know in this note about types 4, 5 and 6 Cartan factors is that the first one is reflexive while the last two are finite dimensional (see \cite{Ka97} for more details).\smallskip

According to \cite{BuChu}, given a Cartan factor of type $j\in \{1,\ldots, 6\},$ the elementary JB$^*$-triple $K_j$ of type $j$ is defined in the following terms: $K_1 = K (H_1, H_2)$; $K_i = C \cap K(H)$ when $C$ is of type $i = 2 , 3$, and $K_i = C$ if the latter is of type $ 4, 5,$ or
$6$. Obviously, if $K$ is an elementary JB$^*$-triple of type $j$, its bidual is precisely a Cartan factor of $j$.\smallskip

We establish next a version of \cite[Lemma 5.6]{Ka97} for elementary JB$^*$-triples.

\begin{lemma}\label{l biorthogonal of a non-complete tripotent in an elementary} Let $K$ be an elementary JB$^*$-triple. Suppose $e$ is a non-complete tripotent in $K.$ Then we have $\{e\}^{\perp \perp}=K_0(e)^\perp= K_2(e).$
\end{lemma}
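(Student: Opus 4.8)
The plan is to reduce the statement to the known result for Cartan factors, Kaup's \cite[Lemma 5.6]{Ka97}, by passing to the bidual. Write $C=K^{**}$, which (as recalled above) is the Cartan factor of the same type as $K$; equip it with its triple product extending that of $K$, so that $e$ becomes a tripotent in $C$. Since each Peirce projection $P_j(e)$ is a fixed polynomial in $L(e,e)$ and $Q(e)$, and $K$ is a subtriple containing $e$, every $P_j(e)$ leaves $K$ invariant; hence $K_j(e)=P_j(e)(K)=K\cap C_j(e)$ for $j=0,1,2$. In particular $K_0(e)\neq\{0\}$ gives $C_0(e)\supseteq K_0(e)\neq\{0\}$, so $e$ is non-complete in $C$ as well and Kaup's lemma applies, yielding $\{e\}_{_C}^{\perp\perp}=C_0(e)^{\perp}=C_2(e)$. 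Recalling that $z\perp e$ is equivalent to $L(e,e)z=0$, one has $\{e\}^{\perp}=E_0(e)$ in any JB$^*$-triple, so the general chain recorded before (from \cite[Proposition 3.3]{BurGarPe11}) reads $K_2(e)\subseteq\{e\}_{_K}^{\perp\perp}=K_0(e)^{\perp}\subseteq K_2(e)\oplus K_1(e)$. The inclusion $K_2(e)\subseteq\{e\}_{_K}^{\perp\perp}$ and the identity $\{e\}_{_K}^{\perp\perp}=K_0(e)^{\perp}$ are thus already in hand, and the whole content of the lemma is the reverse inclusion $\{e\}_{_K}^{\perp\perp}\subseteq K_2(e)$.

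To prove it I would fix $x\in\{e\}_{_K}^{\perp\perp}=K_0(e)^{\perp}$, so that $\{x,x,w\}=0$ for every $w\in K_0(e)$, and show $x\in C_2(e)$; since $x\in K$ this gives $x\in K\cap C_2(e)=K_2(e)$. The crucial observation is that $K_0(e)$ is weak$^*$ dense in $C_0(e)$: indeed $K$ is weak$^*$ dense in $C$ by Goldstine's theorem, and $P_0(e)$ is weak$^*$ continuous on $C$ with $P_0(e)(K)=K_0(e)$ and $P_0(e)(C)=C_0(e)$, so every element of $C_0(e)$ is a weak$^*$ limit of a net in $K_0(e)$. Now the map $w\mapsto\{x,x,w\}$ is weak$^*$ continuous on $C$ by separate weak$^*$ continuity of the triple product, hence its kernel is weak$^*$ closed; as this kernel contains $K_0(e)$, it contains $\overline{K_0(e)}^{\,w^*}\supseteq C_0(e)$. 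Therefore $x\perp C_0(e)$, i.e. $x\in C_0(e)^{\perp}=\{e\}_{_C}^{\perp\perp}=C_2(e)$ by Kaup's lemma, as required.

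I expect the main obstacle to be precisely this upgrade from orthogonality to $K_0(e)$ to orthogonality to the larger space $C_0(e)$: orthogonal complements do not restrict well in general, and it is only the combination of Goldstine's theorem, the weak$^*$ continuity of the Peirce-$0$ projection, and the separate weak$^*$ continuity of the triple product that allows Kaup's computation in $C$ to be transported back to $K$. The remaining points, namely that the Peirce projections preserve $K$, that non-completeness passes to the bidual, and the bookkeeping with the Peirce decomposition, should be routine.
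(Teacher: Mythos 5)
Your argument is correct and follows essentially the same route as the paper's own proof: pass to the bidual Cartan factor $C=K^{**}$, use Goldstine's theorem together with the weak$^*$ continuity of $P_0(e)$ to see that $K_0(e)$ is weak$^*$ dense in $C_0(e)$, upgrade $\{x,x,\cdot\}=0$ from $K_0(e)$ to $C_0(e)$ by separate weak$^*$ continuity of the triple product, and invoke \cite[Lemma 5.6]{Ka97}. The only cosmetic difference is that the paper treats the reflexive types $4,5,6$ as a separate (trivial) case, whereas you also spell out explicitly that non-completeness of $e$ passes from $K$ to $C$; both points are fine.
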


\begin{proof} If $K$ is an elementary JB$^*$-triple of type $j\in \{4,5,6\}$ we know that $K$ is a Cartan factor of the same type and hence the conclusion follows from \cite[Lemma 5.6]{Ka97}.\smallskip

Suppose $K$ is an elementary JB$^*$-triple of type $j\in \{1,2,3\}$. The corresponding bidual $K^{**}= C$ is a Cartan factor of type $j$. By Goldstine's theorem $K$ is weak$^*$ dense in $C$. Since $e\in K$, and $P_0(e)$ is weak$^*$ continuous in $C$, we deduce that $K_0(e) =\{e\}_{_K}^{\perp}$ and $K_2(e)$ are weak$^*$ dense in $C_0(e) =\{e\}_{_C}^{\perp}$ and $C_2(e)$, respectively. Thus the desired conclusion also follows from \cite[Lemma 5.6]{Ka97}. Namely, we know that $\{e\}_{_K}^{\perp \perp}= K_0(e)^{\perp} \supseteq K_2(e)$ \cite[Proposition 3.3]{BurGarPe11}. Let us take $x\in \{e\}_{_K}^{\perp\perp}$. In this case, $\{x,x,z\} =0$ for all $z\in \{e\}_{_K}^{\perp}= K_0(e)$. It follows from the weak$^*$ density of $K_0(e)$ in $C_0(E)$ and the separate weak$^*$ continuity of the triple product of $C$ that $\{x,x,a\}=0$ for all $a\in  \{e\}_{_C}^{\perp}= C_0(e)$. By \cite[Lemma 5.6]{Ka97} we have $x\in \{e\}_{_C}^{\perp \perp}=C_0(e)^\perp= C_2(e),$ and consequently $x\in K_2(e) = C_2(e) \cap K$.
\end{proof}

According to \cite{BuChu}, an element $x$ in a JB$^*$-triple $E$ is called \emph{weakly compact} (respectively, \emph{compact}) if
the operator $Q(x):E\rightarrow E$ is weakly compact (respectively, compact). We say that $E$ is \emph{weakly compact}
(respectively, \emph{compact}) if every element in $E$ is weakly compact (respectively, compact). If we denote by $K(E)$ the Banach
subspace of $E$ generated by its minimal tripotents, then $K(E)$ is a (norm closed) triple ideal of $E$ and it coincides with the
set of weakly compact elements of $E$ (see Proposition 4.7 in \cite{BuChu}).
It follows from \cite[Lemma 3.3 and Theorem 3.4]{BuChu} that a JB$^*$ triple, $E,$ is weakly compact if
and only if one of the following statement holds: \begin{enumerate}[$a)$] \item $K(E^{**})=K(E)$. \item $K(E)=E.$
\item $E$ is a $c_0$-sum of elementary JB$^*$-triples.
\end{enumerate}

Obviously each non-zero tripotent in a weakly compact JB$^*$-triple is of finite rank. It was observed in \cite[Corollary 2.5]{PeTan16} that the results in \cite{BuChu} can be applied to deduce that a JB$^*$-triple $E$ is weakly compact if and only if it contains every tripotent of $E^{**}$ which is compact relative to $E$ in the sense of \cite{EdRu96,FerPe06}.\smallskip

It should be remarked here that weakly compact JB$^*$-triples are the JB$^*$-triple analogue of compact C$^*$-algebras in the sense employed in \cite{Alex,Yli}, with the exception that a C$^*$-algebra is compact if, and only if, it is weakly compact (see \cite{Yli}).

\begin{corollary}\label{c biorthogonal in an elementary through the orthogonal minimal tripotents} Let $K$ be an elementary JB$^*$-triple. Suppose $e$ is a non-complete tripotent in $K.$ Let $x$ be an element in $K$ satisfying $x\perp v$ for every minimal tripotent $v\in K$ with $v\perp e$. Then $x\in K_2(e)$.
\end{corollary}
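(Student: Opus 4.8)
The plan is to reduce the statement to Lemma~\ref{l biorthogonal of a non-complete tripotent in an elementary} by showing that $x$ lies in the biorthogonal complement $\{e\}^{\perp\perp}$. Recall first that for a tripotent $e$ one has $z\perp e$ if and only if $z\in K_0(e)$, so that $\{e\}^{\perp}=K_0(e)$. Hence it suffices to prove that $x\perp z$ for every $z\in K_0(e)$; once this is established, $x\in\{e\}^{\perp\perp}=K_2(e)$ by the quoted lemma, since $e$ is non-complete. Note also that, $e$ being non-complete, $K_0(e)\neq\{0\}$.

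The next step is to understand $K_0(e)$ intrinsically. By Peirce arithmetic the Peirce-$0$ subspace is an inner ideal, indeed $\{K_0(e),K,K_0(e)\}\subseteq K_0(e)$, and in particular $K_0(e)$ is a norm-closed JB$^*$-subtriple of $K$. Since every element of the weakly compact triple $K$ is weakly compact and this property is inherited by subtriples, $K_0(e)$ is itself a weakly compact JB$^*$-triple; consequently $K(K_0(e))=K_0(e)$, i.e. $K_0(e)$ coincides with the norm-closed linear span of its minimal tripotents. Moreover, if $v$ is a minimal tripotent of $K_0(e)$ then, using the inner ideal property, $K_2(v)=Q(v)^2 K\subseteq K_0(e)$, whence $K_2(v)=(K_0(e))_2(v)=\CC v$ and $v$ is in fact a minimal tripotent of $K$; of course $v\in K_0(e)$ forces $v\perp e$. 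Therefore each minimal tripotent of $K_0(e)$ is a minimal tripotent of $K$ orthogonal to $e$, and the hypothesis on $x$ yields $x\perp v$ for all of them.

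Finally I would pass from minimal tripotents to all of $K_0(e)$ by a linearity and continuity argument. The operator $L(x,x)$ is linear and bounded, so its kernel $\{z\in K:\{x,x,z\}=0\}$ is a norm-closed subspace of $K$. Since $x\perp v$ is equivalent to $\{x,x,v\}=0$, this kernel contains every minimal tripotent of $K_0(e)$, hence it contains their closed linear span, which is exactly $K_0(e)$. Thus $\{x,x,z\}=0$, i.e. $x\perp z$, for every $z\in K_0(e)=\{e\}^{\perp}$, giving $x\in\{e\}^{\perp\perp}=K_2(e)$ as desired. I expect the only genuinely delicate points to be the verification that $K_0(e)$ is weakly compact (hence spanned by its minimal tripotents) and the identification of those minimal tripotents with minimal tripotents of $K$ via the inner ideal property; the subsequent passage to the whole Peirce subspace is then immediate from the continuity of $L(x,x)$.
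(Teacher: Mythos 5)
Your proposal is correct and follows essentially the same route as the paper: both reduce to Lemma~\ref{l biorthogonal of a non-complete tripotent in an elementary} by showing $x\in K_0(e)^{\perp}=\{e\}^{\perp\perp}$, using that $K_0(e)$ is a non-zero weakly compact inner ideal whose minimal tripotents are minimal in $K$ and orthogonal to $e$, and span a dense subspace. The only (harmless) differences are in the justifications: the paper cites \cite[Corollary 3.5 and Remark 4.6]{BuChu} for the weak compactness of $K_0(e)$ and the approximation by positive combinations of orthogonal minimal tripotents, whereas you argue hereditariness of weak compactness directly, verify via the inner ideal property that minimal tripotents of $K_0(e)$ are minimal in $K$, and pass to the closed linear span through the continuity of $L(x,x)$.
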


\begin{proof} Let us observe that from Peirce arithmetic the Peirce 0-subspace $K_0(e) \neq \{0\}$ is an inner ideal of $K$. Corollary 3.5 in \cite{BuChu} together with the fact that $K$ is a factor show that $K_0(e)$ must be a weakly compact JB$^*$-triple. By Remark 4.6 in \cite{BuChu} every element in the weakly compact JB$^*$-triple $K_0(e)$ can be approximated in norm by finite positive linear combinations of mutually orthogonal minimal tripotents in $K_0(e)$. Since every minimal tripotent in $K_0(e)$ is a minimal tripotent in $K$ which is orthogonal to $e$, it follows from the hypothesis that $x\perp K_0(e)^{\perp}= \{e\}^{\perp\perp}$. Lemma \ref{l biorthogonal of a non-complete tripotent in an elementary} implies that $x\in K_2(e)$.
\end{proof}

Let us observe that Corollary \ref{c biorthogonal in an elementary through the orthogonal minimal tripotents} can be also proved by a direct argument in the case in which $K=K(H)$ where $H$ is a complex Hilbert space.\smallskip

We are actually interested in finding conditions on a tripotent $e$ in a weakly compact JB$^*$-triple $E$ to guarantee that the equality $\{e\}^{\perp \perp}=K_0(e)^\perp= K_2(e)$ holds (compare Lemma \ref{l biorthogonal of a non-complete tripotent in an elementary} and the counterexample in page \pageref{eq counterexample biorthog complement}).

\begin{lemma}\label{l biorthogonal of a non-complete tripotent in a wk JBstar} Let $\displaystyle E = \bigoplus_{i\in I}^{c_0} K_i$ be a weakly compact JB$^*$-triple, where each $K_i$ is an elementary JB$^*$-triple. For each $i\in I$, let $\pi_i$ denote the projection of $E$ onto $K_i$. Suppose $e$ is a tripotent in $E$ such that $\pi_i (e)$ is a non-complete tripotent in $K_i$ for every $i\in I$.  Then the identity $\{e\}^{\perp \perp}_{_E}=E_0(e)^\perp= E_2(e)$ holds.
\end{lemma}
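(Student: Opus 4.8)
The plan is to reduce the assertion coordinatewise to Lemma~\ref{l biorthogonal of a non-complete tripotent in an elementary}, already proved for the elementary summands. Since the inclusions $E_2(e) \subseteq \{e\}^{\perp\perp}_{_E} = E_0(e)^{\perp}$ hold for every tripotent by \cite[Proposition 3.3]{BurGarPe11}, the whole content lies in the reverse inclusion $\{e\}^{\perp\perp}_{_E} \subseteq E_2(e)$.

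First I would record that every relevant operation on $E = \bigoplus_{i\in I}^{c_0} K_i$ is performed componentwise. Writing $e_i := \pi_i(e)$, the triple product of $E$ is computed coordinate by coordinate, hence so is $L(e,e)$; consequently each Peirce projection $P_j(e)$ acts on the $i$-th summand as the Peirce projection $P_j(e_i)$ of $K_i$, and therefore $E_j(e) = \bigoplus_{i\in I}^{c_0} (K_i)_j(e_i)$ for $j\in\{0,1,2\}$. For the same reason two elements $a,b\in E$ are orthogonal if and only if $\pi_i(a)\perp\pi_i(b)$ in $K_i$ for every $i\in I$; in particular $\{e\}^{\perp}_{_E} = E_0(e) = \bigoplus_{i\in I}^{c_0} (K_i)_0(e_i)$.

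Next I would localize. Take $x\in \{e\}^{\perp\perp}_{_E} = E_0(e)^{\perp}$, fix $i\in I$ and pick an arbitrary $z_i\in (K_i)_0(e_i)$. The element $z\in E$ having $z_i$ in its $i$-th coordinate and $0$ elsewhere belongs to $E_0(e)$, so $x\perp z$, which by the componentwise description of orthogonality forces $\pi_i(x)\perp z_i$. Letting $z_i$ run over $(K_i)_0(e_i)$ gives $\pi_i(x)\in (K_i)_0(e_i)^{\perp} = \{e_i\}^{\perp\perp}_{_{K_i}}$. The hypothesis that $e_i=\pi_i(e)$ is a non-complete tripotent of the elementary JB$^*$-triple $K_i$ now allows me to invoke Lemma~\ref{l biorthogonal of a non-complete tripotent in an elementary}, obtaining $\{e_i\}^{\perp\perp}_{_{K_i}} = (K_i)_2(e_i)$, whence $\pi_i(x)\in (K_i)_2(e_i)$. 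As this holds for every $i\in I$ and $x\in E$, I conclude $x\in \bigoplus_{i\in I}^{c_0}(K_i)_2(e_i) = E_2(e)$, which is the desired inclusion.

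Once the coordinatewise picture is established the argument is routine, so the only point that genuinely requires care---and hence the main (if modest) obstacle---is the rigorous justification that the Peirce decomposition and the orthogonality relation of the $c_0$-sum split over the summands, in particular that the Peirce-$j$ subspace of $E$ equals the $c_0$-sum, rather than merely the algebraic direct sum, of the summandwise Peirce subspaces. One should also observe that the statement is consistent on the indices $i$ with $e_i=0$: since $e$ is a tripotent in a $c_0$-sum, $e_i\neq 0$ for only finitely many $i$, the zero element is a non-complete tripotent of $K_i$, and Lemma~\ref{l biorthogonal of a non-complete tripotent in an elementary} holds trivially there, so no index must be discarded.
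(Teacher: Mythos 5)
Your proof is correct and follows essentially the same route as the paper's: both reduce the computation of $\{e\}^{\perp\perp}_{_E}$ coordinatewise over the $c_0$-sum and then invoke Lemma \ref{l biorthogonal of a non-complete tripotent in an elementary} in each summand. The only cosmetic difference is that the paper isolates the finite set $I_1=\{i : \pi_i(e)\neq 0\}$ and treats the remaining coordinates separately, whereas you handle all indices uniformly by observing that the elementary lemma holds trivially when $e_i=0$.
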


\begin{proof} For each $x\in E$, we shall write $x= (x_i)_{i\in I}$ with $x_i = \pi_i(x)$. Since $e$ is a tripotent in $E$ the set $I_1:=\{ i \in I \ : \ \pi_i(e)\neq 0\}$ must be finite. We observe that $$\{e\}_{_E}^{\perp} = \left\{x\in E \ : \ x_i \in \{e_i\}_{_{K_i}}^{\perp} \hbox{ for all } i \in I_1\right\},$$ and hence $$\{e\}_{_E}^{\perp\perp} = \left\{x\in E \ : \ x_i\in \{e_i\}_{_{K_i}}^{\perp\perp} \hbox{ for all } i \in I_1, \ x_i  =0 \hbox{ for all } i \in I\backslash I_1 \right\}.$$ Since for $i\in I_1$, $e_i$ is a non-complete tripotent in $K_i$ it follows from Lemma \ref{l biorthogonal of a non-complete tripotent in an elementary} that $\{e_i\}_{_{K_i}}^{\perp\perp} = (K_i)_2(e_i)$, and consequently $\{e\}_{_E}^{\perp\perp} =K_2(e)$.
\end{proof}

Let us observe that the conclusion in the previous Lemma \ref{l biorthogonal of a non-complete tripotent in a wk JBstar} remains true when $E$ is replaced with an $\ell_{\infty}$-sum of Cartan factors.

\section{New properties derived from the facial structure of an elementary JB$^*$-triple}\label{sec:new facial properties}

Along this paper, given a Banach space $X$, the symbols $\mathcal{B}_{X}$ and $S(X)$ will stand for the closed unit ball and the unit sphere of $X$, respectively. %Given a complex Hilbert space $H$ we shall write $B(H)$ and $K(H)$ for the C$^*$-algebras of bounded and compact linear operators on $H$, respectively.
\smallskip

The main goal of this paper is to prove that every weakly compact JB$^*$-triple satisfies the Mazur-Ulam property. In a first step we shall study this property in the case of an elementary JB$^*$-triple $K$. If $K$ is reflexive it follows that $K=K^{**}$ is a Cartan factor, and hence the desired property follows from \cite[Theorem 4.14, Proposition 4.15 and Remark 4.16]{BeCuFerPe2018} when $K$ has rank one or rank bigger than or equal to three and from \cite[Theorem 1.1.]{KalPe2019} in the remaining cases. We shall therefore restrict our study to the case in which $K$ is a non-reflexive elementary JB$^*$-triple (equivalently, an infinite dimensional elementary JB$^*$-triple of type 1, 2 or 3).\label{label restriction to non-reflexive elementary}\smallskip

As in many previous studies, the facial structure of the closed unit ball of a Banach space $X$ is a key tool to determine if $X$ satisfies the Mazur--Ulam property. The main reason being the fact that a surjective isometry $\Delta$ between the unit spheres of two Banach spaces $X$ and $Y$ maps maximal proper faces of $\mathcal{B}_{X}$ to maximal proper faces of $\mathcal{B}_{Y}$ (cf. \cite[Lemma 5.1]{ChenDong2011}, \cite[Lemma 3.5]{Tan2014} and \cite[Lemma 3.3]{Tan2016}).\smallskip

We recall that a convex subset $F$ of a convex set $\mathcal{C}$ is called a \emph{face} of $\mathcal{C}$ if for every $x\in F$ and every $y,z\in \mathcal{C}$ such that $x= t y + (1-t) z$ for some $t\in [0,1]$, we have $y,z\in F$. Let us observe that every proper (i.e., non-empty and non-total) face of the closed unit ball of a Banach space $X$ is contained in $S(X)$. Following the notation in \cite{MoriOza2018}, a closed face $F \subseteq S(X)$ is called an \emph{intersection face} if
$$ F = \bigcap \Big\{ E \ : \ E \subseteq S(X) \hbox{ a maximal face containing } F \Big\}.$$ If $X$ is a complex C$^*$-algebra or the the predual of a von Neumann algebra, or more generally, a JB$^*$-triple or the predual of a JBW$^*$-triple, every proper norm closed face of $\mathcal{B}_{X}$ is an intersection face (see \cite[Corollary 3.4]{Tan2017b} and \cite[Proof of Proposition 2.4 and comments after and before Corollary 2.5]{FerGarPeVill17}). It should be remarked that this conclusion can be also derived from the main results in \cite{EdFerHosPe2010}. These facts together with \cite[Lemma 8]{MoriOza2018} are employed in the next result which was already stated in \cite[Lemma 2.2]{KalPe2019}.

\begin{lemma}\label{l intersection faces MoriOzawa L8}{\rm(\cite[Lemma 2.2]{KalPe2019}, \cite[Lemma 8]{MoriOza2018}, \cite[Proposition 2.4]{FerGarPeVill17}, \cite[Corollary 3.11]{EdFerHosPe2010})} Let $\Delta: S(E)\to S(Y)$ be a surjective isometry where $E$ is a JB$^*$-triple and $Y$ is a real Banach space. Then $\Delta$ maps proper norm closed faces of $\mathcal{B}_{E}$ to intersection faces in $S(Y)$. Furthermore, if $F$ is a proper norm closed face of $\mathcal{B}_{E}$ then $\Delta(-F) = -\Delta(F)$.
\end{lemma}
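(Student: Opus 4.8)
The plan is to split the argument into the two asserted conclusions and to reduce everything to the behaviour of $\Delta$ on maximal proper faces. The starting point is the result quoted just before the statement (\cite[Lemma 5.1]{ChenDong2011}, \cite[Lemma 3.5]{Tan2014}, \cite[Lemma 3.3]{Tan2016}): a surjective isometry between two unit spheres carries maximal proper faces to maximal proper faces. Applying this to $\Delta$ and to its inverse $\Delta^{-1}$ (which is again a surjective isometry) shows that $\Delta$ restricts to an inclusion-preserving bijection between the maximal proper faces of $\mathcal{B}_{E}$ and those of $\mathcal{B}_{Y}$. I also record the elementary fact that a bijection commutes with arbitrary intersections, $\Delta(\bigcap_\alpha A_\alpha) = \bigcap_\alpha \Delta(A_\alpha)$, since this is what transports the intersection-face structure.

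First I would prove that $\Delta(F)$ is an intersection face. Fix a proper norm closed face $F$ of $\mathcal{B}_{E}$. Because $E$ is a JB$^*$-triple, $F$ is itself an intersection face (\cite[Corollary 3.4]{Tan2017b}, \cite[Proposition 2.4]{FerGarPeVill17}, \cite[Corollary 3.11]{EdFerHosPe2010}), so $F = \bigcap\{M : M \hbox{ maximal}, M \supseteq F\}$. Using the bijection above I would check that the maximal faces of $S(Y)$ containing $\Delta(F)$ are exactly the images $\Delta(M)$ of the maximal faces $M\supseteq F$: indeed, if $N$ is a maximal face with $N \supseteq \Delta(F)$, then $\Delta^{-1}(N)$ is a maximal face containing $F$. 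Hence
$$\bigcap\{N : N \hbox{ maximal}, N \supseteq \Delta(F)\} = \bigcap_{M \supseteq F} \Delta(M) = \Delta\Big(\bigcap_{M \supseteq F} M\Big) = \Delta(F),$$
so $\Delta(F)$ coincides with the intersection of the maximal faces containing it; being an intersection of (closed) maximal faces it is a norm closed face, and therefore an intersection face.

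For the identity $\Delta(-F)=-\Delta(F)$ I would first reduce to maximal faces. Since $x\mapsto -x$ is a surjective isometry of $S(E)$ mapping faces to faces, $-F$ is again a proper closed face, and its containing maximal faces are precisely the $-M$ with $M\supseteq F$ maximal. Writing $F=\bigcap_{M\supseteq F}M$ and using that both $\Delta$ and $x\mapsto -x$ commute with intersections, the equality would follow once it is established for maximal faces, via
$$\Delta(-F)=\bigcap_{M\supseteq F}\Delta(-M)=\bigcap_{M\supseteq F}\big(-\Delta(M)\big)=-\bigcap_{M\supseteq F}\Delta(M)=-\Delta(F).$$
It thus remains to treat a single maximal proper face $M$. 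Here I would invoke the metric description of antipodal faces: for every $x\in M$ and $w\in -M$ one has $\|x-w\|=2$, a relation preserved by the isometry $\Delta$, so that every point of $\Delta(-M)$ lies at distance $2$ from every point of $\Delta(M)$; since $-\Delta(M)$ is also a maximal face standing in the same relation to $\Delta(M)$, one wants to identify $\Delta(-M)=-\Delta(M)$.

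The main obstacle is precisely this last identification inside the target space $Y$. The forward implication ``antipodal $\Rightarrow$ constant distance $2$'' is immediate and isometry invariant, but the converse — that a maximal face of $S(Y)$ at constant distance $2$ from $\Delta(M)$ must be its antipode — is \emph{not} a formal consequence of $\Delta$ preserving distances, because $\Delta$ controls differences $\|x-x'\|$ but says nothing about sums $\|x+x'\|$, and $Y$ carries no a priori triple structure. This is exactly the content borrowed from \cite[Lemma 8]{MoriOza2018}, which, together with the fact (already used above) that every proper norm closed face of $\mathcal{B}_{E}$ is an intersection face, supplies the missing characterization in $Y$ and pins down $\Delta(-M)$ as $-\Delta(M)$, completing the reduction.
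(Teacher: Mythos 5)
Your reconstruction matches the paper's intended derivation: the paper offers no proof of this lemma, merely citing the facial-structure results (every proper norm closed face of $\mathcal{B}_{E}$ is an intersection face, via \cite[Proposition 2.4]{FerGarPeVill17} and \cite[Corollary 3.11]{EdFerHosPe2010}) together with \cite[Lemma 8]{MoriOza2018}, which is exactly the combination you assemble. Your explicit identification of the antipodal step $\Delta(-M)=-\Delta(M)$ for maximal faces as the one non-formal ingredient that must be borrowed from Mori--Ozawa's Lemma 8 (rather than deduced from distance preservation alone) is accurate, so the proposal is correct and follows essentially the same route as the paper.
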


The structure of all norm closed faces of the closed unit ball of a JB$^*$-triple $E$ was completely determined in \cite{EdFerHosPe2010}, where it is shown that each norm closed face of $\mathcal{B}_{E}$ is univocally given by a tripotent in $E^{**}$ which is compact relative to $E$ (see \cite[Theorem 3.10 and Corollary 3.12]{EdFerHosPe2010} and the concrete definitions therein). As we observed before, each weakly compact JB$^*$-triple $E$ contains all tripotents in $E^{**}$ which are compact relative to $E$ (see \cite[Corollary 2.5]{PeTan16}). Therefore, the norm closed faces of the closed unit ball of a weakly compact JB$^*$-triple $E$ are completely determined by the tripotents in $E$, consequently Theorem 3.10 in \cite{EdFerHosPe2010} in this concrete setting assures that for each proper norm closed face $F$ of $\mathcal{B}_{E}$ there exists a non-zero finite rank tripotent $e\in E$ such that \begin{equation}\label{eq structure of norm closed proper faces in the closed unit ball of weakly compact JB*triples} F= F_e = e + \mathcal{B}_{E_0(e)} = (e+E_0(e))\cap \mathcal{B}_{E}.
\end{equation}

Let us comment some of the difficulties we can find when applying our current knowledge. Suppose $\Delta: S(K(H)) \to S(Y)$ is a surjective isometry, where $H$ is an infinite dimensional complex Hilbert space and $Y$ is a real Banach space. For each non-zero partial isometry $e\in K(H)$, Lemma \ref{l intersection faces MoriOzawa L8} shows that $\Delta (F_e)$ is an intersection face in $S(Y)$ and the restriction $\Delta|_{F_{e}} : F_e \to \Delta (F_e)$ is a surjective isometry too. Henceforth, given an element $x_0$ in a Banach space $X,$ we shall write $\mathcal{T}_{x_0}: X\to X$ for the translation mapping defined by $\mathcal{T}_{x_0} (x) = x+x_0$ ($x\in X$). By considering the commutative diagram
$$\begin{tikzcd}
F_e \arrow{r}{\Delta|_{F_e}} \arrow[swap]{d}{\mathcal{T}_{-e}}
& \Delta({F}_e) \\
(1-ee^*) \mathcal{B}_{ K(H)}  (1-e^*e) \arrow[dashrightarrow, "\Delta_{e}"]{ru} &
\end{tikzcd}$$ we realize that if we could prove that $\mathcal{B}_{(1-ee^*) K(H) (1-e^*e)}$ satisfied the strong Mankiewicz property, we could get some progress to determine the behavior of $\Delta$ on $F_e$. However, $(1-ee^*) K(H) (1-e^*e)$ is a JB$^*$-triple whose closed unit ball contains no extreme points (let us observe that $H$ is infinite dimensional with $ee^*$ and $e^* e$ finite rank projections). So, our current technology is not enough to attack the problem from this perspective. We shall develop a new facial argument not contained in the available literature.\smallskip

The following result is implicit in \cite[Remark 20]{FerMarPe2012} and a detailed explanation can be found in \cite[Lemma 3.3]{PeTan16} from where it has been taken.

\begin{lemma}\label{l two finite rank tripotents at distance 1 are orthogonal}{\cite[Lemma 3.3]{PeTan16}} Let $e$ be a tripotent in a JB$^*$-triple $E$. Suppose $x$ is an element in $\mathcal{B}_E$ satisfying $\|e\pm x\| = 1$. Then $x\perp  e$.
\end{lemma}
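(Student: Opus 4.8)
The plan is to reduce the statement to a direct Hilbert-space computation, after a harmless reformulation. Since the eigenvalues of $L(e,e)$ are $0,\tfrac12,1$ and $x\perp e$ is equivalent to $\{e,e,x\}=L(e,e)x=0$, the assertion is exactly that $P_2(e)x=P_1(e)x=0$, i.e. $x\in E_0(e)$. The converse implication is the familiar fact that orthogonal elements satisfy $\|e\pm x\|=\max\{\|e\|,\|x\|\}$, which is the source of the intuition. For the direct implication I would first observe that $f(t):=\|e+tx\|$ is convex on $[-1,1]$ with $f(-1)=f(0)=f(1)=1$; a convex function attaining its maximum at an interior point is constant, so $\|e+tx\|=1$ for all $t\in[-1,1]$. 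In particular the two inequalities $\|e\pm x\|\le 1$ are both available, and these are all I will use.

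Next I would pass to a concrete model via the Gelfand--Naimark theorem for JB$^*$-triples, which embeds $E$ isometrically as a subtriple of an $\ell_\infty$-sum $\bigoplus_{i}^{\ell_\infty} C_i$ of Cartan factors. Since this embedding is a triple monomorphism it preserves the triple product, the Peirce decomposition with respect to $e$, and hence orthogonality; moreover $\{x,x,e\}=0$ in $E$ if and only if $\{\pi_i(x),\pi_i(x),\pi_i(e)\}=0$ for every coordinate projection $\pi_i$. Writing $e_i=\pi_i(e)$ (a tripotent) and $x_i=\pi_i(x)$, with $\|x_i\|\le 1$ and $\|e_i\pm x_i\|\le 1$, it therefore suffices to prove $x_i\perp e_i$ inside a single Cartan factor. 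In a type $1$ factor $B(H_1,H_2)$ I would decompose each operator into the four blocks determined by the final and initial projections $ee^*$ and $e^*e$ of $e$: the two off-diagonal blocks make up $E_1(e)$, the $ee^*\,(\cdot)\,e^*e$-corner is $E_2(e)$, and the complementary corner is $E_0(e)$. Averaging $\|e\pm x\|\le 1$ over the two signs and applying the parallelogram law to the first column and the first row of the block matrix, where $e$ restricts to an isometry, yields a Pythagorean inequality that forces both the $E_2(e)$-corner and the two $E_1(e)$-blocks of $x$ to vanish. This settles types $1$, $2$ and $3$ simultaneously, the latter two by restriction from the ambient $B(H)$ in which they sit as $t$-symmetric or $t$-skew-symmetric subtriples, and likewise the spin factors of type $4$, which are special and admit such a realization.

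The main obstacle is that the exceptional Cartan factors of types $5$ and $6$ admit no realization inside any $B(H)$, so the Hilbert-space computation does not apply to them. Being finite-dimensional (and of rank $2$, respectively $3$), they can be disposed of through their explicit geometry; more satisfactorily, I would seek a single coordinate-free replacement for the parallelogram step, namely a Pythagorean-type lower bound of the form $\|e+x\|^2\ge 1+c\,\big(\|P_2(e)x\|^2+\|P_1(e)x\|^2\big)$ valid in every JB$^*$-triple and extracted purely from the Peirce multiplication rules. Establishing this estimate with a usable constant is the crux of the whole argument. It is worth stressing why no softer, first-order device suffices: the one-sided derivative of $t\mapsto\|e+tx\|$ at $0$ equals $\sup\{\operatorname{Re}\varphi(x):\varphi\in S(E^*),\ \varphi(e)=1\}$, and since every such $\varphi$ satisfies $\varphi=\varphi P_2(e)$, the vanishing of this derivative only annihilates the self-adjoint part of $P_2(e)x$ in the JB$^*$-algebra $E_2(e)$. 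The imaginary part of the Peirce-$2$ component and the entire Peirce-$1$ component are genuinely second-order contributions, which is precisely why the quadratic estimate cannot be bypassed.
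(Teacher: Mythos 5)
Your overall strategy---reduce to Cartan factors via the Friedman--Russo Gelfand--Naimark representation and then compute---is legitimate as far as it goes, and the block-matrix computation you sketch in a type 1 factor is correct: averaging $\|(e\pm x)\xi\|^2\le 1$ over the two signs for unit vectors $\xi$ in the initial space of $e$ kills the Peirce-2 block and one Peirce-1 block of $x$, and the same argument applied to adjoints kills the other, which covers types 1, 2, 3 and the spin factors. The problem is that the argument stops exactly at the point you yourself identify as ``the crux'': the exceptional factors of types 5 and 6 are set aside, and the universal quadratic estimate that is supposed to replace the case analysis is never proved. As literally stated that estimate is even false---take $x=-\frac12 e$, so that $\|e+x\|=\frac12$ while $P_2(e)x\neq 0$---so it would first have to be reformulated with the maximum (or the average) of $\|e\pm x\|^2$ on the left-hand side, and then established in an arbitrary JB$^*$-triple; nothing in the proposal does either. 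A proof whose central inequality is announced but not established, and whose remaining cases are ``disposed of through their explicit geometry'' without that geometry being exhibited, is a plan rather than a proof.

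For comparison, the paper does not reprove this lemma: it imports it from \cite[Lemma 3.3]{PeTan16} (see also \cite[Remark 20]{FerMarPe2012}), where the argument is classification-free and very short. The point is facial: for any tripotent $e$ the set $e+\mathcal{B}_{E^{**}_0(e)}$ is a face of $\mathcal{B}_{E^{**}}$ (Edwards--R\"uttimann; cf.\ \cite{EdFerHosPe2010}); since $e=\frac12(e+x)+\frac12(e-x)$ with $e\pm x\in\mathcal{B}_E$, both $e\pm x$ must lie in that face, whence $x\in E^{**}_0(e)\cap E=E_0(e)=\{e\}^{\perp}$. No representation theorem and no exceptional cases are needed. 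One further correction: your closing claim that the imaginary part of $P_2(e)x$ is ``genuinely second-order'' is inaccurate. Every norm-one functional $\varphi$ with $\varphi(e)=1$ satisfies $\varphi=\varphi\circ P_2(e)$, and the two-sided hypothesis gives $|1+\varphi(x)|^2+|1-\varphi(x)|^2=2+2|\varphi(x)|^2\le 2$, hence $\varphi(x)=0$; as these functionals restrict to the states of the unital JB$^*$-algebra $E_2(e)$, which separate its points, this already yields $P_2(e)x=0$ by soft means. Only the Peirce-1 component genuinely resists first-order arguments, and that is precisely the part your proposal leaves unproved outside the special factors.
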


We shall need the next consequence.

\begin{lemma}\label{l two finite rank tripotents at distance leq 1 are orthogonal} Let $e$ be a tripotent in a JB$^*$-triple $E$. Suppose $x$ is an element in $S(E)$ satisfying $\|e\pm x\| \leq 1$. Then $x\perp  e$.
\end{lemma}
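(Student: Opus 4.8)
The plan is to deduce this statement directly from the preceding Lemma \ref{l two finite rank tripotents at distance 1 are orthogonal}, whose hypothesis requires the \emph{equalities} $\|e\pm x\|=1$ rather than the mere bounds $\|e\pm x\|\le 1$ assumed here. So the whole task reduces to upgrading the two inequalities to equalities, and for this the extra information $\|x\|=1$ (i.e.\ $x\in S(E)$) is exactly what is needed. Once $\|e+x\|=\|e-x\|=1$ is established, the element $x$ lies in $\mathcal{B}_E$, Lemma \ref{l two finite rank tripotents at distance 1 are orthogonal} applies verbatim, and we conclude $x\perp e$.

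The key step is a one-line application of the triangle inequality to the telescoping identity $2x=(e+x)-(e-x)$. Since $\|x\|=1$, the left-hand side has norm $\|2x\|=2$, and the triangle inequality together with the standing hypotheses gives
$$ 2 = \|2x\| = \|(e+x)-(e-x)\| \leq \|e+x\| + \|e-x\| \leq 1 + 1 = 2. $$
All inequalities are therefore equalities, which forces $\|e+x\|=\|e-x\|=1$ (each summand is at most $1$ and their sum is $2$). With these equalities in hand, and observing $x\in\mathcal{B}_E$, Lemma \ref{l two finite rank tripotents at distance 1 are orthogonal} yields $x\perp e$, as desired.

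There is no serious obstacle in this argument; it is a clean reduction, and I expect the only point meriting a remark to be the trivial case $e=0$, where $x\perp e$ holds automatically (for $e\neq 0$ one has $\|e\|=1$, though this norm is not even used above). The single conceptual move is recognizing that $\|x\|=1$ makes the triangle inequality tight, thereby converting the one-sided bounds into the equalities required by the earlier lemma.
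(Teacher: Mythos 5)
Your proof is correct and is essentially identical to the paper's: the paper writes $x=\tfrac12(x+e)+\tfrac12(x-e)$ and forces equality in the triangle inequality using $\|x\|=1$, which is just a rescaling of your identity $2x=(e+x)-(e-x)$, and then both arguments invoke Lemma \ref{l two finite rank tripotents at distance 1 are orthogonal}.
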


\begin{proof} Since $S(E)\ni x = \frac12 (x+e )+ \frac12 (x-e)$, we deduce from $\|e\pm x\| \leq 1$ that $\|e\pm x\| = 1$. Lemma \ref{l two finite rank tripotents at distance 1 are orthogonal} implies that $ x\perp e$ as desired.
\end{proof}

The next result has been borrowed from \cite{KalPe2019}.

\begin{lemma}\label{c Tingley antipodal thm for tripotents}\cite[Corollary 2.4]{KalPe2019} Let $\Delta: S(E)\to S(Y)$ be a surjective isometry where $E$ is a JB$^*$-triple and $Y$ is a real Banach space. Suppose $e$ is a non-zero tripotent in $E$, then $\Delta(-e) = -\Delta(e)$.
\end{lemma}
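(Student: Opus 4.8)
The plan is to pin down each of $\pm e$ as a metrically distinguished point of the proper face it generates, and then to exploit the fact that $\Delta$ intertwines these two faces through the central symmetry of $Y$. Recall that for a tripotent $e$ the set $F_e = e + \mathcal{B}_{E_0(e)}$ is a proper norm closed face of $\mathcal{B}_E$ (this is the general form of \eqref{eq structure of norm closed proper faces in the closed unit ball of weakly compact JB*triples}, valid in the underlying facial theory); since $E_0(-e) = E_0(e)$ and $\mathcal{B}_{E_0(e)}$ is symmetric, we have $F_{-e} = -F_e$. First I would dispose of the complete case: if $E_0(e) = \{0\}$ then $F_e = \{e\}$ is a singleton face and Lemma \ref{l intersection faces MoriOzawa L8} gives $\{\Delta(-e)\} = \Delta(F_{-e}) = \Delta(-F_e) = -\Delta(F_e) = \{-\Delta(e)\}$ immediately. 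So from now on I assume $E_0(e) \neq \{0\}$.

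The key device is the notion of a \emph{relative (self) Chebyshev centre}: for a bounded metric space $(M,d)$ call $c \in M$ such a centre if it minimises $r(c) := \sup_{x\in M} d(c,x)$ among the points of $M$. This is an intrinsic metric invariant, hence is preserved by every surjective isometry between metric spaces; in particular the set of relative Chebyshev centres, and its cardinality, is transported by the restriction of $\Delta$ to a face. I claim that $e$ is the \emph{unique} relative Chebyshev centre of $F_e$. Writing a generic point of $F_e$ as $e+u$ with $u \in \mathcal{B}_{E_0(e)}$ and using $e \perp u$, one computes $r(e) = \sup_{u} \|u\| = 1$, whereas for $c = e+w$ with $0 \neq w \in \mathcal{B}_{E_0(e)}$ the admissible choice $u = -\|w\|^{-1} w \in \mathcal{B}_{E_0(e)}$ yields $r(c) \geq \|u - w\| = 1 + \|w\| > 1$. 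Thus $e$ is the only point of $F_e$ attaining the minimal radius $1$, and the same computation shows that $-e$ is the unique relative Chebyshev centre of $F_{-e}$.

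Now I assemble the pieces. Since $\Delta|_{F_e}\colon F_e \to \Delta(F_e)$ is a surjective isometry, $\Delta(e)$ is the unique relative Chebyshev centre of $\Delta(F_e)$; likewise $\Delta(-e)$ is the unique relative Chebyshev centre of $\Delta(F_{-e})$. By Lemma \ref{l intersection faces MoriOzawa L8} one has $\Delta(F_{-e}) = \Delta(-F_e) = -\Delta(F_e)$. Finally, the central symmetry $j\colon y \mapsto -y$ is a global isometry of $Y$ carrying $\Delta(F_e)$ onto $-\Delta(F_e) = \Delta(F_{-e})$, so it must send the unique relative Chebyshev centre $\Delta(e)$ of $\Delta(F_e)$ to the unique relative Chebyshev centre of $\Delta(F_{-e})$, namely $\Delta(-e)$. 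Hence $-\Delta(e) = \Delta(-e)$, which is the assertion.

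The hard part this plan must overcome is precisely that $e$ is not an extreme point of $\mathcal{B}_E$ and that $F_e$ typically has no extreme points at all (for instance when $e$ is a minimal tripotent in $K(H)$), so $e$ cannot be singled out inside $F_e$ by extremality, and no structural description of the image face $\Delta(F_e)\subseteq S(Y)$ is available. The relative Chebyshev centre circumvents both obstacles, since it locates $e$ purely through the intrinsic metric of $F_e$ and therefore survives transport by $\Delta$ even though $\Delta(F_e)$ is only known to be an intersection face. The one external ingredient needed is that $F_e$ really is a proper norm closed face, so that the antipodal identity $\Delta(-F) = -\Delta(F)$ of Lemma \ref{l intersection faces MoriOzawa L8} applies.
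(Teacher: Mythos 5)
Your argument is correct, but note that the paper does not actually prove this lemma: it is imported verbatim from \cite[Corollary 2.4]{KalPe2019}, so your proof is necessarily a different (and self-contained) route. The core of your argument --- that $e$ is the unique minimiser of $c\mapsto \sup_{x\in F_e}\|c-x\|$ on $F_e=e+\mathcal{B}_{E_0(e)}$, because $r(e+w)\geq \|w\|+1$ for $w\neq 0$ while $r(e)=1$, and that this metric invariant is transported first by $\Delta|_{F_e}$ and then by the ambient symmetry $y\mapsto -y$ through the identity $\Delta(-F_e)=-\Delta(F_e)$ of Lemma \ref{l intersection faces MoriOzawa L8} --- is sound, and the complete case $E_0(e)=\{0\}$ is correctly handled separately. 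What it buys is a purely metric localisation of $e$ inside its face that avoids any extremality or convexity considerations on the image side, exactly the obstruction you identify. The one point you should pin down with a precise reference is the very first one: the lemma is stated for an \emph{arbitrary} JB$^*$-triple $E$, whereas the paper only records the formula \eqref{eq structure of norm closed proper faces in the closed unit ball of weakly compact JB*triples} in the weakly compact setting. For a general $E$ you need that every (non-zero) tripotent $e\in E$ is compact relative to $E$ (it is its own support tripotent), so that \cite[Theorem 3.10]{EdFerHosPe2010} yields that $(e+E_0^{**}(e))\cap\mathcal{B}_E=e+\mathcal{B}_{E_0(e)}$ is a proper norm closed face of $\mathcal{B}_E$; only then does Lemma \ref{l intersection faces MoriOzawa L8} apply to give $\Delta(-F_e)=-\Delta(F_e)$. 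With that reference supplied, the proof is complete.
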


We are now in position to establish a new geometric result, based on the facial structure of $\mathcal{B}_{K(H)}$, which provides a new tool to prove the Mazur--Ulam property in the case of elementary JB$^*$-triples.

\begin{proposition}\label{p new geometric facial property} Let $\Delta: S(K) \to S(Y)$ be a surjective isometry, where $K$ is an elementary JB$^*$-triple and $Y$ is a real Banach space. Then for each tripotent $e\in K$ and each minimal tripotent $u\in K$ with $e\perp u$ the set $\Delta(u+ \mathcal{B}_{K_2(e)})$ is convex and the restriction $\Delta|_{u+ \mathcal{B}_{K_2(e)}}: u+ \mathcal{B}_{K_2(e)} \to \Delta(u + \mathcal{B}_{K_2(e)})$ is an affine mapping. Consequently, there exists a real linear isometry $T^{u}_e$ from $K_2(e)$ onto a norm closed subspace of $Y$ satisfying $\Delta( \mathcal{T}_{u} (x) )=\Delta( u +x) = T^{u}_e (x) + \Delta(u)$ for all $x\in \mathcal{B}_{K_2(e)}.$
\end{proposition}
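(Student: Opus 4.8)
The plan is to exhibit $\mathcal{C}:=u+\mathcal{B}_{K_2(e)}$ as the intersection of a maximal face with a family of orthogonality conditions, to deduce from this description that $\Delta(\mathcal{C})$ is convex, and only then to invoke the strong Mankiewicz property on the \emph{finite-dimensional} space $K_2(e)$ to obtain affineness. We may assume $e\neq 0$, and (as fixed before the statement) that $K$ is non-reflexive; then $e$ has finite rank, so $K_2(e)$ is a ``corner'' of the form $qKp$ with $q,p$ finite-rank projections, hence finite-dimensional and reflexive, and $e+u$ is a \emph{non-complete} finite-rank tripotent. Since $K_2(e)\perp K_0(e)\ni u$, every $x\in\mathcal{B}_{K_2(e)}$ is orthogonal to $u$, so that $\|u+x\|=\max\{1,\|x\|\}=1$ and $\mathcal{C}\subseteq S(K)$; thus $\Delta$ is defined on $\mathcal{C}$.

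Set $\mathcal{M}:=\{v : v \hbox{ is a minimal tripotent of } K,\ v\perp (e+u)\}$ (a non-empty set, since $e+u$ is non-complete). I first would establish the identity $\mathcal{C}=F_u\cap\bigcap_{v\in\mathcal{M}}\{z\in S(K): z\perp v\}$. The inclusion ``$\subseteq$'' is immediate: $x\in K_2(e)\subseteq K_0(u)$ gives $u+x\in F_u$, and $x\perp v$, $u\perp v$ give $u+x\perp v$ for each $v\in\mathcal{M}$. For ``$\supseteq$'', write such a $z$ as $z=u+w$ with $w\in\mathcal{B}_{K_0(u)}$; since orthogonality to a fixed $v$ is a linear condition and $u\perp v$, we get $w\perp v$ for every minimal $v\perp(e+u)$, whence $w\in K_2(e+u)$ by Corollary \ref{c biorthogonal in an elementary through the orthogonal minimal tripotents} applied to the non-complete tripotent $e+u$. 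Using \eqref{eq Peirce 2 of an orthogonal sum} and minimality of $u$ we have $K_2(e+u)=K_2(e)\oplus\CC u\oplus\bigl(K_1(e)\cap K_1(u)\bigr)$, and intersecting with $K_0(u)$ (forced by $w\perp u$) kills the last two summands, leaving $w\in K_2(e)$; hence $z\in\mathcal{C}$.

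The hard part is the convexity of $\Delta(\mathcal{C})$, which I would obtain as follows. As $u$ is minimal, $F_u$ is a \emph{maximal} proper norm-closed face of $\mathcal{B}_K$; since surjective isometries between spheres carry maximal proper faces to maximal proper faces (\cite{ChenDong2011, Tan2014, Tan2016}, as recalled before Lemma \ref{l intersection faces MoriOzawa L8}), $\Delta(F_u)$ is a maximal proper face of $\mathcal{B}_Y$, and therefore of the form $\{y\in\mathcal{B}_Y: f(y)=1\}$ for some $f\in S(Y^{*})$. Take $\eta_i=\Delta(u+x_i)\in\Delta(\mathcal{C})$ ($i=1,2$) and put $m=\tfrac12(\eta_1+\eta_2)$. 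Since $\eta_1,\eta_2\in\Delta(F_u)$ we have $f(m)=1$, forcing $\|m\|=1$ and $m\in\Delta(F_u)$; let $z=\Delta^{-1}(m)$, so $z\in F_u$. For each $v\in\mathcal{M}$, orthogonality $u+x_i\perp v$ gives $\|\eta_i\pm\Delta(v)\|=\|(u+x_i)\pm v\|=1$, using $\Delta(-v)=-\Delta(v)$ (Lemma \ref{c Tingley antipodal thm for tripotents}); the triangle inequality yields $\|m\pm\Delta(v)\|\leq 1$, and transporting this back through the isometry $\Delta$ (again via $\Delta(-v)=-\Delta(v)$) gives $\|z\pm v\|\leq 1$, whence $z\perp v$ by Lemma \ref{l two finite rank tripotents at distance leq 1 are orthogonal}. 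By the identity of the previous paragraph, $z\in\mathcal{C}$, i.e. $m\in\Delta(\mathcal{C})$. Thus $\Delta(\mathcal{C})$ is midpoint-convex, and being the continuous image of the compact set $\mathcal{C}$ it is closed, hence convex.

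Finally, since $K_2(e)$ is finite-dimensional, $\overline{\mathrm{co}}\,(\partial_e\mathcal{B}_{K_2(e)})=\mathcal{B}_{K_2(e)}$ has non-empty interior, so by \cite[Theorem 2]{MoriOza2018} the convex body $\mathcal{B}_{K_2(e)}$ enjoys the strong Mankiewicz property. Applying it to the surjective isometry $x\mapsto\Delta(u+x)$ from $\mathcal{B}_{K_2(e)}$ onto the convex set $\Delta(\mathcal{C})$ shows that this map is affine; equivalently $\Delta|_{\mathcal{C}}$ is affine and $\Delta(\mathcal{C})$ is convex, as claimed. Defining $T^{u}_e(x):=\Delta(u+x)-\Delta(u)$ for $x\in\mathcal{B}_{K_2(e)}$, affineness gives $T^{u}_e(0)=0$ and compatibility with convex combinations, while $\|T^{u}_e(x)-T^{u}_e(x')\|=\|(u+x)-(u+x')\|=\|x-x'\|$ shows $T^{u}_e$ is isometric on $\mathcal{B}_{K_2(e)}$. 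Extending by positive homogeneity produces a real-linear isometry $T^{u}_e$ from $K_2(e)$ onto a norm-closed subspace of $Y$ satisfying $\Delta(\mathcal{T}_u(x))=\Delta(u+x)=T^{u}_e(x)+\Delta(u)$ for all $x\in\mathcal{B}_{K_2(e)}$.
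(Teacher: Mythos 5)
Your proof is correct and follows essentially the same route as the paper: both identify $u+\mathcal{B}_{K_2(e)}$ through orthogonality to the minimal tripotents orthogonal to $w=e+u$ (via Lemma \ref{c Tingley antipodal thm for tripotents}, Lemma \ref{l two finite rank tripotents at distance leq 1 are orthogonal} and Corollary \ref{c biorthogonal in an elementary through the orthogonal minimal tripotents}), transfer these conditions through $\Delta$, and conclude with the strong Mankiewicz property of the finite-dimensional ball $\mathcal{B}_{K_2(e)}$. The only difference is cosmetic: the paper gets convexity of $\Delta(u+\mathcal{B}_{K_2(e)})$ by exhibiting it directly as an intersection of convex sets $\mathcal{D}_1^v$ inside the intersection face $\Delta(F_u)$, whereas you prove midpoint convexity using the supporting functional of the maximal face $\Delta(F_u)$ and then invoke compactness for closedness.
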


\begin{proof} As we commented in page \pageref{label restriction to non-reflexive elementary}, by \cite[Theorem 4.14, Proposition 4.15 and Remark 4.16]{BeCuFerPe2018} and \cite[Theorem 1.1.]{KalPe2019}, we can assume that $K$ is non-reflexive (i.e. an infinite dimensional elementary JB$^*$-triple of type 1, 2 or 3), and hence every tripotent in $K$ is non-complete and of finite rank and $K$ has infinite rank.\smallskip

Let $e$ and $u$ be non-zero tripotents in $K$ such that $u$ is minimal and $u\perp e$. The conclusion clearly holds for $e=0$, we can therefore assume that $e$ is a non-zero finite rank tripotent. Set $w= e + u$. Keeping the notation in \eqref{eq structure of norm closed proper faces in the closed unit ball of weakly compact JB*triples} for each non-zero tripotent $v\in K$ we write $F_v =  v + \mathcal{B}_{K_0 (v)}$ for the proper norm closed face of $\mathcal{B}_{K}$ associated with $v$.\smallskip

Let $\mathcal{D}_0 =\Delta(F_{u})$. Lemma \ref{l intersection faces MoriOzawa L8} implies that $\mathcal{D}_0$ is an intersection face in $S(Y)$. Let us fix an arbitrary minimal tripotent $v\in K$ such that $v\perp w$. We set $$\mathcal{D}_1^v := \left\{ y\in \mathcal{D}_0 \ : \  \|y\pm \Delta(v) \|\leq 1 \right\}.$$ We claim that $\mathcal{D}_1^v$ is norm closed and convex. Namely, given $y_1,y_2\in \mathcal{D}_1^v$ and $t\in [0,1]$ the convex combination $t y_1 + (1-t) y_2 \in \mathcal{D}_0$ because $\mathcal{D}_0$ is convex. Furthermore $$ \| t y_1 + (1-t) y_2 \pm \Delta (v) \| \leq t\ \| y_1 \pm \Delta (v) \| +  (1-t)\ \| y_2 \pm \Delta (v) \| \leq 1,$$ witnessing that $t y_1 + (1-t) y_2\in \mathcal{D}_1^v$. Clearly $\mathcal{D}_1^v$ is norm closed.\smallskip

Let us consider the inner ideal $K_2(e)$. Clearly $K_2(e)$ is a finite dimensional JBW$^*$-triple. We shall next prove that \begin{equation}\label{eq image of BM + pm} \Delta\left(\mathcal{B}_{K_2(e)} + u \right)\! =\! \bigcap \Big\{ \mathcal{D}_1^v  : v \hbox{ a minimal tripotent in } K \hbox{ with } v\perp w \Big\}.
\end{equation}

$(\subseteq)$ Let $v\in K$ be a minimal tripotent with $v\perp w$. We take an element $u+z \in u + \mathcal{B}_{K_2(e)}$. Since $u+ z   \perp v,$ $\Delta(-v) = -\Delta(v)$ (see Lemma \ref{c Tingley antipodal thm for tripotents}), and $\Delta$ is an isometry we have $$\left\| \Delta(u +z) \pm \Delta (v) \right\| = \left\| z+u \pm v \right\| =\max\{\left\|z+u\right\|, \|v\| \} =1. $$

$(\supseteq)$ Suppose next that $y\in \mathcal{D}_1^v $ for every minimal tripotent $v$ in $K$ with $v\perp w$. It follows from the definition that $y \in \mathcal{D}_0 =\Delta(F_{u})$ and hence there exists $x\in F_{u}$ satisfying $\Delta(x) =y$. Thus, by Lemma \ref{c Tingley antipodal thm for tripotents} and the assumptions, we have $1\geq \|y\pm \Delta(v) \| = \| x\pm v \|,$ for every $v$ as above. Lemma \ref{l two finite rank tripotents at distance leq 1 are orthogonal} implies that $x\perp v$ for every minimal tripotent $v\in K$ with $v\perp w$. It follows from Corollary \ref{c biorthogonal in an elementary through the orthogonal minimal tripotents} that $x\in K_2 (w)$, and since $x\in F_{u}=  u + \mathcal{B}_{K_0 (u)}$ we can easily see, for example from \eqref{eq Peirce 2 of an orthogonal sum}, that $x\in u+ \mathcal{B}_{K_2(e)}$, as desired.\smallskip

We consider next the following commutative diagram
\begin{equation}\label{eq diagram Delta u e} \begin{tikzcd}  u + \mathcal{B}_{K_2(e)} \arrow[swap]{d}{\tau_{-u}} \arrow{r}{\Delta} &  \Delta\left( u + \mathcal{B}_{K_2(e)} \right) \\
  \mathcal{B}_{K_2(e)}  \arrow[dashrightarrow]{r}{\Delta^{u}_e} &  \Delta\left( u + \mathcal{B}_{K_2(e)} \right)-\Delta(u) \arrow[u, "\tau_{\Delta(u)}"]
\end{tikzcd}
 \end{equation}
It follows from \eqref{eq image of BM + pm} that $\Delta\left( u + \mathcal{B}_{K_2(e)} \right),$ and hence $\Delta\left( u + \mathcal{B}_{K_2(e)} \right)-\Delta(u),$ is a norm closed convex subset of $S(Y)$. Since $K_2(e)$ is a finite dimensional JBW$^*$-triple, it follows from \cite[Corollary 2.2]{BeCuFerPe2018} that $\mathcal{B}_{K_2(e)}$ satisfies the strong Mankiewicz property. Therefore, by the strong Mankiewicz property there exists a surjective real linear isometry $T_e^u $ from $K_2(e)$ onto a norm closed subspace of $Y$ whose restriction to  $\mathcal{B}_{K_2(e)}$ is $\Delta^{u}_e$, that is $$ \Delta (u +x) =\Delta(u) + \Delta^{u}_e (x) = \Delta(u) + T^{u}_e (x),$$ for all $x\in  \mathcal{B}_{K_2(e)}$.
\end{proof}

%An appropriate extension of Proposition \ref{p new geometric facial property} for non-elementary weakly compact JB$^*$-triples will be stated in the next section.\smallskip

Let us remark a consequence of the previous Lemma \ref{l two finite rank tripotents at distance leq 1 are orthogonal} and Corollary \ref{c biorthogonal in an elementary through the orthogonal minimal tripotents}. The statement has been actually outlined in the proof of the previous proposition.

\begin{lemma}\label{l sphere M minimal tripotents orhtogonal to e} Let $e$ be a non-complete tripotent in an elementary JB$^*$-triple $K$. Let $\{e\}^{\perp}_{min}=\{v \hbox{ minimal tripotent in } K \hbox{ with } v\perp e\}.$ Then $$\mathcal{B}_{K_2(e)} = \{ x\in \mathcal{B}_{K} : \|x-v\|\leq 1 \hbox{ for all } v\in \{e\}^{\perp}_{min} \}.$$
\end{lemma}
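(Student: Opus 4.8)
The plan is to establish the asserted set equality by proving the two inclusions separately; both were already implicit inside the proof of Proposition \ref{p new geometric facial property}. Write $R := \{ x\in \mathcal{B}_{K} : \|x-v\|\leq 1 \hbox{ for all } v\in \{e\}^{\perp}_{min} \}$ for the set on the right-hand side. The two structural facts I would lean on are, first, that $K_0(e) = \{e\}^{\perp}$, and second, that the chain $K_2(e) \subseteq \{e\}^{\perp\perp} = K_0(e)^{\perp}$ holds (see \cite[Proposition 3.3]{BurGarPe11}, and Lemma \ref{l biorthogonal of a non-complete tripotent in an elementary} in the elementary setting). Combined with the norm identity $\|a\pm b\|=\max\{\|a\|,\|b\|\}$ for orthogonal elements, these reduce everything to orthogonality computations.

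For the inclusion $\mathcal{B}_{K_2(e)} \subseteq R$, I would fix $x \in \mathcal{B}_{K_2(e)}$ and an arbitrary $v \in \{e\}^{\perp}_{min}$. Since $v\perp e$ we have $v \in K_0(e)$, and since $x \in K_2(e) \subseteq K_0(e)^{\perp}$, the elements $x$ and $v$ are orthogonal. The norm identity for orthogonal elements then gives $\|x - v\| = \max\{\|x\|, \|v\|\} = \max\{\|x\|,1\} = 1$, so $x \in R$. This direction uses neither the non-completeness of $e$ nor that $K$ is elementary, as the inclusion $K_2(e)\subseteq K_0(e)^{\perp}$ is universal.

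For the reverse inclusion $R \subseteq \mathcal{B}_{K_2(e)}$, I would fix $x \in R$ and an arbitrary minimal tripotent $v \perp e$. The first key observation is that $\{e\}^{\perp}_{min}$ is stable under $v \mapsto -v$, since negation carries a minimal tripotent orthogonal to $e$ to a minimal tripotent orthogonal to $e$; hence the defining condition on $x$ yields both $\|x - v\| \leq 1$ and $\|x + v\| \leq 1$, i.e. $\|v\pm x\|\le 1$. The second key point is to run the convexity argument on the norm-one tripotent $v$ rather than on $x$: from $v = \tfrac12 (v+x) + \tfrac12 (v-x)$ and $\|v\pm x\|\le 1$ one forces $\|v \pm x\| = 1$. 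Lemma \ref{l two finite rank tripotents at distance 1 are orthogonal}, applied to the tripotent $v$ and the ball element $x$, then gives $x \perp v$. As $v\in\{e\}^{\perp}_{min}$ was arbitrary and $e$ is non-complete, Corollary \ref{c biorthogonal in an elementary through the orthogonal minimal tripotents} yields $x \in K_2(e)$; together with $\|x\|\le 1$ this gives $x \in \mathcal{B}_{K_2(e)}$.

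The main subtlety — essentially the only place requiring care — is precisely this last midpoint step. Here $x$ is only assumed to lie in $\mathcal{B}_{K}$, not on $S(K)$, so Lemma \ref{l two finite rank tripotents at distance leq 1 are orthogonal}, whose hypothesis is $x\in S(E)$, is not directly applicable. The remedy is to exploit the norm-one tripotent $v$ itself: the midpoint identity on $v$ upgrades $\|v\pm x\|\le 1$ to equality regardless of the norm of $x$, after which the ball version Lemma \ref{l two finite rank tripotents at distance 1 are orthogonal} applies verbatim. Everything else is a routine application of Peirce orthogonality and the annihilator characterization already recorded for elementary JB$^*$-triples.
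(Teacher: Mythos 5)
Your proof is correct and follows essentially the same route as the paper: for $(\subseteq)$, Peirce orthogonality of $K_2(e)$ and $K_0(e)$ together with $\|x-v\|=\max\{\|x\|,\|v\|\}$; for $(\supseteq)$, the stability of $\{e\}^{\perp}_{min}$ under $v\mapsto -v$, orthogonality extracted from $\|x\pm v\|\leq 1$, and Corollary \ref{c biorthogonal in an elementary through the orthogonal minimal tripotents}. The one point where you are actually more careful than the paper is the midpoint step: the paper invokes Lemma \ref{l two finite rank tripotents at distance leq 1 are orthogonal} directly although its hypothesis requires the element to lie in $S(K)$ whereas here $x$ is only assumed to be in $\mathcal{B}_{K}$, and your repair --- running the convexity identity on the norm-one tripotent $v$ to force $\|v\pm x\|=1$ and then applying Lemma \ref{l two finite rank tripotents at distance 1 are orthogonal}, which only needs $x\in\mathcal{B}_{K}$ --- is exactly the right way to close that small gap.
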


\begin{proof}$(\subseteq)$ Take $x\in \mathcal{B}_{K_2(e)}$. It follows from Peirce arithmetic that $x\perp v$ for all $v\in \{e\}^{\perp}_{min}$, and thus $\|x- v\| = \max\{\|x\|,\|v\|\} \leq 1$.\smallskip

$(\supseteq)$ Assume now that $x\in \mathcal{B}_{K}$ with $\|x-v\|\leq 1$ for all $v\in \{e\}^{\perp}_{min}$. We observe that $-v\in \{e\}^{\perp}_{min}$ for all $v\in \{e\}^{\perp}_{min}$. Therefore $\|x\pm v\|\leq 1$ for all $v\in \{e\}^{\perp}_{min}$. Lemma \ref{l two finite rank tripotents at distance leq 1 are orthogonal} implies that $x\perp v$ for all $v\in \{e\}^{\perp}_{min}$. Corollary \ref{c biorthogonal in an elementary through the orthogonal minimal tripotents} proves that $x\in K_2(e)$ as desired.
\end{proof}

In the following proposition we explore the properties of the real linear isometries $T^{u}_e$ given by Proposition \ref{p new geometric facial property}.

\begin{proposition}\label{p first consequence from the new geometric facial property} Let $\Delta: S(K) \to S(Y)$ be a surjective isometry, where $K$ is an elementary JB$^*$-triple and $Y$ is a real Banach space. Suppose $e$ and $v$ are tripotents in $K$ with $v$ minimal and $v\perp e$. Let $T^{v}_e : K_2(e) \to Y$ be the real linear isometry given by Proposition \ref{p new geometric facial property}. Then the following statements hold:
\begin{enumerate}[$(a)$]\item $T^{v}_e = T^{-v}_{-e} = T^{-v}_e = T^{v}_{-e}$;
\item Suppose $u$ is a minimal tripotent with $u\perp v$. Then $\Delta(u) = T_{u}^{v} (u)$;
\item Suppose $u$ is a minimal tripotent in $K_2(e)$. Then $\Delta(u) = T_{e}^{v} (u)$;
\item For each $x\in S(K_2(e))$ we have $ T^{v}_e (x) = \Delta(x);$
\item If $w$ is another minimal tripotent with $w\perp e$, the real linear isometries $T^{v}_e$ and $T^{w}_e$ coincide;
\end{enumerate}
\end{proposition}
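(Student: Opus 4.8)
The plan is to deduce everything from Proposition~\ref{p new geometric facial property}, which makes $\Delta$ affine on translated balls of the form $u+\mathcal{B}_{K_2(\cdot)}$ with $u$ a minimal tripotent, together with the antipodal law $\Delta(-t)=-\Delta(t)$ for tripotents $t$ supplied by Lemma~\ref{c Tingley antipodal thm for tripotents}. Two elementary remarks are used repeatedly: since $L(-e,-e)=L(e,e)$ we have $K_2(-e)=K_2(e)$, so all four maps in $(a)$ are real-linear isometries defined on the \emph{same} finite-dimensional JB$^*$-triple $K_2(e)$; and each $T^{u}_{e}$ is \emph{uniquely} determined by the identity $\Delta(u+x)=\Delta(u)+T^{u}_{e}(x)$, $x\in\mathcal{B}_{K_2(e)}$, of Proposition~\ref{p new geometric facial property}.

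To prove $(a)$, observe first that the defining identity for $T^{-v}_{-e}$ is literally the one for $T^{-v}_{e}$ (because $\mathcal{B}_{K_2(-e)}=\mathcal{B}_{K_2(e)}$), and likewise for $T^{v}_{-e}$ and $T^{v}_{e}$; by the uniqueness remark $T^{-v}_{-e}=T^{-v}_{e}$ and $T^{v}_{-e}=T^{v}_{e}$, so only $T^{v}_{e}=T^{-v}_{e}$ remains. Fix any minimal tripotent $u\in K_2(e)$. Peirce arithmetic gives $K_2(e)\perp K_0(e)$, and $v\in K_0(e)$, so $u\perp v$ and $v+u$ is a tripotent; Lemma~\ref{c Tingley antipodal thm for tripotents} then yields $\Delta(-(v+u))=-\Delta(v+u)$. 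Expanding both sides by Proposition~\ref{p new geometric facial property} (with superscripts $v$ and $-v$, and $\Delta(-v)=-\Delta(v)$) the term $-\Delta(v)$ cancels and leaves $T^{-v}_{e}(u)=T^{v}_{e}(u)$. As $e^{i\theta}u$ is again a minimal tripotent, the real-linear span of the minimal tripotents of $K_2(e)$ is all of $K_2(e)$; since $T^{v}_{e}$ and $T^{-v}_{e}$ are real linear and agree on this spanning set, they coincide.

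Parts $(b)$ and $(c)$ both reduce to the additivity $\Delta(v+u)=\Delta(v)+\Delta(u)$ for orthogonal minimal tripotents $u,v$, because $T^{v}_{u}(u)=\Delta(v+u)-\Delta(v)$ and, when $u\in K_2(e)$, also $T^{v}_{e}(u)=\Delta(v+u)-\Delta(v)$. I would prove this additivity by applying Proposition~\ref{p new geometric facial property} twice. With superscript $u$ and subscript $v$, $\Delta$ is affine on $u+\mathcal{B}_{\mathbb{C}v}$, so $\Delta(u)=\tfrac12\Delta(u+v)+\tfrac12\Delta(u-v)$. With superscript $v$ and subscript $u$, $\Delta(v+u)=\Delta(v)+T^{v}_{u}(u)$ and $\Delta(v-u)=\Delta(v)-T^{v}_{u}(u)$; and since $u-v$ is a tripotent, Lemma~\ref{c Tingley antipodal thm for tripotents} gives $\Delta(u-v)=-\Delta(v-u)=-\Delta(v)+T^{v}_{u}(u)$. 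Substituting the last three identities into the first, the two copies of $\Delta(v)$ cancel and I obtain $\Delta(u)=T^{v}_{u}(u)$, which is $(b)$; statement $(c)$ is then immediate.

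The heart of the matter is $(d)$, from which $(e)$ is a formality. Given $x\in S(K_2(e))$, decompose it (as is possible in the finite-dimensional $K_2(e)$) as $x=\sum_{i=1}^{n}\lambda_i u_i$ with $u_i$ mutually orthogonal minimal tripotents, $\lambda_i>0$ and $\max_i\lambda_i=\|x\|=1$; after relabelling, $\lambda_1=1$, so $x=u_1+y$ with $y=\sum_{i\ge 2}\lambda_i u_i\in\mathcal{B}_{K_2(e)}$, and each $u_i\in K_2(e)$ satisfies $u_i\perp v$ (again $K_2(e)\perp K_0(e)$). The key device is to set $e^{\sharp}=v+u_2+\dots+u_n$, a tripotent with $u_1\perp e^{\sharp}$: by \eqref{eq Peirce 2 of an orthogonal sum} both $v$ and $y$ lie in $K_2(e^{\sharp})$, so the three points $v+x$, $x$, $-v+x$ all lie in $u_1+\mathcal{B}_{K_2(e^{\sharp})}$, on which $\Delta$ is affine by Proposition~\ref{p new geometric facial property}. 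Since $x=\tfrac12(v+x)+\tfrac12(-v+x)$, affineness gives $\Delta(x)=\tfrac12\Delta(v+x)+\tfrac12\Delta(-v+x)$; expanding the right-hand side through Proposition~\ref{p new geometric facial property}, $\Delta(-v)=-\Delta(v)$ and part $(a)$ (whence $\Delta(-v+x)=-\Delta(v)+T^{v}_{e}(x)$) collapses it to $T^{v}_{e}(x)=\Delta(x)$. Finally, for $(e)$, part $(d)$ gives $T^{v}_{e}(x)=\Delta(x)=T^{w}_{e}(x)$ for all $x\in S(K_2(e))$, and two real-linear maps coinciding on the unit sphere are equal. The principal obstacle is the step in $(d)$ of enclosing the antipodal segment $[\,v+x,\,-v+x\,]$ through $x$ inside a single affine domain $u_1+\mathcal{B}_{K_2(e^{\sharp})}$; the spectral decomposition is exactly what lets me enlarge the subscript tripotent from $e$ to $e^{\sharp}$ so that Proposition~\ref{p new geometric facial property} applies, after which the computations are routine.
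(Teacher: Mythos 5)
Your proposal is correct, and for parts $(a)$, $(b)$, $(c)$ and $(e)$ it follows essentially the same route as the paper: the identities $T^{-v}_{-e}=T^{-v}_{e}$ and $T^{v}_{-e}=T^{v}_{e}$ are immediate from $K_2(-e)=K_2(e)$ and uniqueness, the remaining equality in $(a)$ and the identity $\Delta(u)=T^{v}_{u}(u)$ in $(b)$ come from the same midpoint/antipodal computations the paper performs, and $(c)$, $(e)$ are the same formal consequences. The only genuine divergence is in $(d)$. The paper writes $x=e_1+\sum_{k\ge 2}t_k e_k$, puts $w=e_2+\cdots+e_m$, applies Proposition \ref{p new geometric facial property} to the pair $(w,e_1)$ to get $\Delta(x)=\Delta(e_1)+\sum_k t_k T^{e_1}_{w}(e_k)$, and then uses part $(c)$ twice (once for $T^{e_1}_w$ and once for $T^{v}_e$) together with the linearity of $T^v_e$ to reassemble $T^v_e(x)$. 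You instead enlarge the subscript tripotent to $e^{\sharp}=v+u_2+\cdots+u_n$, which places the whole antipodal segment $[v+x,-v+x]$ inside the single affine domain $u_1+\mathcal{B}_{K_2(e^{\sharp})}$, and then average: $\Delta(x)=\tfrac12\Delta(v+x)+\tfrac12\Delta(-v+x)=T^v_e(x)$ via part $(a)$. Both arguments are sound; the paper's version makes the role of $(c)$ and of the linearity of $T^v_e$ explicit, while yours bypasses $(c)$ entirely in this step (relying on $(a)$ instead) and isolates more cleanly the one nontrivial geometric fact, namely that $v$ and the tail of $x$ can be absorbed into one Peirce-$2$ space via \eqref{eq Peirce 2 of an orthogonal sum}. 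The trade-off is essentially stylistic; your version of $(d)$ is arguably slightly more self-contained, whereas the paper's displays the additivity $\Delta(e_k)\mapsto T^v_e(e_k)$ that is reused verbatim in the proof of Theorem \ref{t elementary JBstar triples satisfy the MUP}.
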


\begin{proof} If $K$ is reflexive, the desired conclusion is an easy consequence of \cite[Theorem 4.14, Proposition 4.15 and Remark 4.16]{BeCuFerPe2018} and \cite[Theorem 1.1.]{KalPe2019} because the latter results show that $\Delta$ admits an extension to a surjective real linear isometry. We shall therefore assume that $K$ is non-reflexive. As we commented before, under our hypotheses, $K_2(e)$ is a (weakly compact) finite dimensional JBW$^*$-triple, and thus every element in $K_2(e)$ can be written as a finite positive combination of mutually orthogonal minimal projections in $K$.\smallskip

$(a)$ It follows from the arguments in the previous paragraph that it suffices to show that $T^{v}_e  (u)= T^{-v}_{-e} (u) = T^{-u}_e (u) = T^{u}_{-e} (u)$ for every minimal tripotent $u$ in $K_2(e)= K_2(-e)$. Fix a minimal tripotent $u\in K_2(e)$. By Lemma \ref{c Tingley antipodal thm for tripotents} and Proposition \ref{p new geometric facial property} we have $$\Delta(v+u) = \Delta(v) + T^{v}_e (u) = - \Delta(-v-u) = \Delta(v) - T^{-v}_e (-u) = \Delta(v) + T^{-v}_e (u),$$
$$\Delta(v-u) = \Delta(v) - T^{v}_e (u) = - \Delta(-v+u) %= \Delta(v) - T^{-v}_e (u)
 = - \Delta(-v) - T^{-v}_{-e} (u)=  \Delta(v) - T^{-v}_{-e} (u),$$ witnessing the desired equalities.\smallskip

$(b)$ The elements $u\pm v $ belong to the convex set $u + \mathcal{B}_{K_2(v)}$. By Proposition \ref{p new geometric facial property} $\Delta$ is affine on $u + \mathcal{B}_{K_2(v)}$. Therefore $\Delta(u) = \frac12 \Delta(u +v ) +\frac12 \Delta(u-v).$ By applying Proposition \ref{p new geometric facial property}, Lemma \ref{c Tingley antipodal thm for tripotents} and $(a)$ we deduce that $$\Delta(u) = \frac12 \Delta(u +v ) +\frac12 \Delta(u-v) = \frac12 \left(\Delta(v) + T_{u}^v (u) + \Delta(-v) + T_{u}^{-v} (u) \right) = T_{u}^v (u).$$

$(c)$ By Proposition \ref{p new geometric facial property} and statement $(b)$ we get $$\Delta(v) + T_e^v(u) =\Delta (v+u ) = \Delta(v) + T_u^v(u) = \Delta(v) + \Delta (u),$$ which proves the desired identity.\smallskip

$(d)$ Let us take $x \in S(K_2(e))$. Having in mind the arguments at the beginning of this proof, we can find mutually orthogonal minimal tripotents $e_1,\ldots, e_m$ such that $\displaystyle x = e_1 +\sum_{k=2}^m t_k e_k$, where $t_2,\ldots, t_m\in (0,1]$. Set $w= e_2+\ldots+e_m$. Proposition \ref{p new geometric facial property} applied to $w$ and $e_1$ guarantees that $$\Delta(x) = \Delta (e_1) + \sum_{k=2}^m t_k T_w^{e_1} (e_k) = \Delta (e_1) + \sum_{k=2}^m t_k \Delta (e_k)$$ $$ = T_e^{v} (e_1) + \sum_{k=2}^m t_k T_e^{v} (e_k)=T_e^{v}\left( e_1 +\sum_{k=2}^m t_k e_k \right) = T_e^{v} (x),$$ where in the second and third equalities we applied $(c)$.\smallskip

$(e)$ This is a trivial consequence of $(d)$ because by this statement the real linear isometries $T^{v}_e$ and $T^{w}_e$ coincide with $\Delta$ on $S(K_2(e))$.
\end{proof}

We can establish next a version of \cite[Corollary 2.7]{KalPe2019} within the framework of elementary JB$^*$-triples.

\begin{corollary}\label{c affine on convex subsets of the sphere in reflexive}  Let $\Delta: S(K) \to S(Y)$ be a surjective isometry, where $K$ is an elementary JB$^*$-triple and $Y$ is a real Banach space. Let $\mathcal{C}\subset S(K)$ be a non-empty convex subset. Then $\Delta|_{\mathcal
{C}}$ is an affine mapping.
\end{corollary}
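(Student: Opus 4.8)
The plan is to reduce the statement to the affinity of $\Delta$ along each segment contained in $\mathcal{C}$, and then to localise such a segment, first inside a maximal face and afterwards inside a finite-dimensional Peirce-$2$ subspace, where Proposition \ref{p first consequence from the new geometric facial property}$(d)$ applies. If $K$ is reflexive then, by \cite[Theorem 4.14, Proposition 4.15 and Remark 4.16]{BeCuFerPe2018} together with \cite[Theorem 1.1.]{KalPe2019}, the map $\Delta$ is already the restriction to $S(K)$ of a surjective real linear isometry, and the conclusion is immediate. I therefore assume, as already observed, that $K$ is non-reflexive; in particular every tripotent of $K$ is non-complete and of finite rank, and $K$ has infinite rank. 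Fix $a,b\in\mathcal{C}$; since $\mathcal{C}$ is convex it suffices to prove that $\Delta(t a + (1-t) b) = t\,\Delta(a) + (1-t)\,\Delta(b)$ for every $t\in[0,1]$.

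For the localisation, I would first place the whole segment in a single face. The midpoint $\frac{a+b}{2}$ lies in $\mathcal{C}\subseteq S(K)$, so by \eqref{eq structure of norm closed proper faces in the closed unit ball of weakly compact JB*triples} there is a non-zero finite rank tripotent $e\in K$ with $\frac{a+b}{2}\in F_e = e + \mathcal{B}_{K_0(e)}$. Since $F_e$ is a face of $\mathcal{B}_{K}$ containing $\frac{a+b}{2}$ with $a,b\in\mathcal{B}_{K}$, the face property forces $a,b\in F_e$, and hence $[a,b]\subseteq F_e$. Write $a = e + a_0$ and $b = e + b_0$ with $a_0,b_0\in\mathcal{B}_{K_0(e)}$.

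Next I would approximate by finite rank and invoke Proposition \ref{p first consequence from the new geometric facial property}$(d)$. As $K_0(e)$ is again a weakly compact JB$^*$-triple, Remark 4.6 in \cite{BuChu} lets me pick finite rank $a_0^{(n)},b_0^{(n)}\in\mathcal{B}_{K_0(e)}$ with $a_0^{(n)}\to a_0$ and $b_0^{(n)}\to b_0$. Then $a^{(n)} := e + a_0^{(n)}$ and $b^{(n)} := e + b_0^{(n)}$ are finite rank elements of $F_e$, and every convex combination $t a^{(n)} + (1-t) b^{(n)} = e + (t a_0^{(n)} + (1-t) b_0^{(n)})$ again lies in $F_e\subseteq S(K)$. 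I would then choose a finite rank tripotent $g_n\in K$ whose Peirce-$2$ subspace $K_2(g_n)$ contains both $a^{(n)}$ and $b^{(n)}$ (for instance the range tripotent of the finite rank pair), together with a minimal tripotent $v_n\perp g_n$ (available since $K$ has infinite rank). All three points $a^{(n)}$, $b^{(n)}$ and $t a^{(n)} + (1-t) b^{(n)}$ then belong to $S(K_2(g_n))$, so Proposition \ref{p first consequence from the new geometric facial property}$(d)$ gives $\Delta = T^{v_n}_{g_n}$ at each of them; since $T^{v_n}_{g_n}$ is real linear,
$$\Delta\big(t a^{(n)} + (1-t) b^{(n)}\big) = T^{v_n}_{g_n}\big(t a^{(n)} + (1-t) b^{(n)}\big) = t\,\Delta(a^{(n)}) + (1-t)\,\Delta(b^{(n)}).$$
Letting $n\to\infty$ and using the continuity of $\Delta$, the left-hand side tends to $\Delta(t a + (1-t) b)$ and the right-hand side to $t\,\Delta(a) + (1-t)\,\Delta(b)$, which is the desired identity.

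The main obstacle is exactly that, $K$ being infinite dimensional, $\mathcal{B}_{K}$ has no extreme points and the elements $a,b$ need not have finite rank, so they cannot be placed in a single finite-dimensional region where Proposition \ref{p new geometric facial property} operates. The two-step localisation is what circumvents this: confining the segment to the face $F_e$ guarantees that the finite rank approximants $a^{(n)},b^{(n)}$ \emph{and all their convex combinations} remain on the sphere (indeed in $F_e$), while the further confinement to $K_2(g_n)$ makes Proposition \ref{p first consequence from the new geometric facial property}$(d)$ applicable. The delicate point to verify is therefore precisely that $t a^{(n)} + (1-t) b^{(n)}\in S(K_2(g_n))$, i.e. that it has norm one, which is exactly what membership in $F_e$ secures.
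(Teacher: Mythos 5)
Your proof is correct, and it runs on the same engine as the paper's: approximate by finite-rank norm-one elements lying in a common finite-dimensional Peirce-$2$ subspace $K_2(g_n)$, identify $\Delta$ there with the real linear isometry $T^{v_n}_{g_n}$ via Proposition \ref{p first consequence from the new geometric facial property}$(d)$, and pass to the limit. Where you genuinely diverge is in the treatment of the one delicate point you yourself single out, namely keeping the convex combination of the approximants on the sphere. The paper approximates $x$ and $y$ directly by $x_1,y_1\in S(K_2(e))$, accepts that $t x_1+(1-t)y_1$ may have norm strictly less than one, normalises it, and closes the argument with a $4\varepsilon$ triangle-inequality estimate. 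You instead first trap the whole segment $[a,b]$ in a proper norm closed face $F_e=e+\mathcal{B}_{K_0(e)}$ by applying the face property to the midpoint, and then approximate $a_0,b_0$ \emph{inside} $\mathcal{B}_{K_0(e)}$, so that every convex combination of the approximants has the form $e+z$ with $z\perp e$, $z\in \mathcal{B}_{K_0(e)}$, hence norm exactly one; this eliminates the normalisation and the $\varepsilon$-bookkeeping in favour of a clean limit. The price is two small facts you should state explicitly: that the midpoint does lie in some face $F_e$ (e.g. via a Hahn--Banach support functional together with \eqref{eq structure of norm closed proper faces in the closed unit ball of weakly compact JB*triples}, or via the spectral resolution of \cite[Remark 4.6]{BuChu} with leading coefficient $1$), and that the finite-rank approximants of $a_0$ can be chosen with norm at most one (partial sums of that spectral resolution do this); the existence of the tripotent $g_n$ with $a^{(n)},b^{(n)}\in K_2(g_n)$ is used in the same unproved form in the paper's Theorem \ref{t elementary JBstar triples satisfy the MUP}, so your appeal to it is legitimate. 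All of these are routine, so there is no gap.
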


\begin{proof} If $K$ is reflexive, then the conclusion follows from \cite[Corollary 2.7]{KalPe2019}. Let us assume that $K$ is non-reflexive. Let $\mathcal{C}\subset S(K)$ be a non-empty convex subset, $x,y\in \mathcal{C}$ and $t\in [0,1]$. By the structure of elementary JB$^*$-triples, for each $0<\varepsilon<\frac12$, there exists a (finite rank) tripotent $e$ in $K$ and elements $x_1,y_1\in S(K_2(e))$ satisfying $\|x-x_1\|, \|y-y_1\|<\varepsilon$ (cf. \cite[Remark 4.6]{BuChu} and the fact that $K$ is an infinite dimensional elementary JB$^*$-triple). An standard argument shows that $$\left\| t x + (1-t) y  -  (t x_1 + (1-t) y_1) \right\| < \varepsilon, $$ and $$\left\| \frac{t x_1 + (1-t) y_1}{\|t x_1 + (1-t) y_1\|} -  (t x_1 + (1-t) y_1) \right\| < \varepsilon.$$

Since $K$ has infinite rank, we can find a minimal tripotent $v$ in $K$ which is orthogonal to $e$. Let $T_e^v: K_2(e) \to Y$ be the real linear isometry given by Proposition \ref{p new geometric facial property}. Having in mind that $x_1,y_1\in K_2(e)$, it follows from the just quoted proposition, the hypothesis on $\Delta$ and Proposition \ref{p first consequence from the new geometric facial property}$(d)$ that
$$\begin{aligned} & \left\| t\Delta(x) + (1-t) \Delta(y) - \Delta (t x + (1-t) y) \right\| \\
&\leq \left\| t\Delta(x) + (1-t) \Delta(y) - t\Delta(x_1) - (1-t) \Delta(y_1) \right\| \\
& + \left\| tT_e^v (x_1) + (1-t) T_e^v (y_1) - T_e^v (t x_1 + (1-t) y_1) \right\| \\
& + \left\|T_e^v (t x_1 + (1-t) y_1) - T_e^v \left( \frac{t x_1 + (1-t) y_1}{\|t x_1 + (1-t) y_1\|} \right) \right\| \\
& + \left\|\Delta \left( \frac{t x_1 + (1-t) y_1}{\|t x_1 + (1-t) y_1\|} \right) - \Delta (t x + (1-t) y) \right\| <4 \varepsilon.
\end{aligned}$$ The arbitrariness of $0<\varepsilon < \frac12$ implies that $t\Delta(x) + (1-t) \Delta(y) = \Delta (t x + (1-t) y)$ as desired.
\end{proof}

\section{Elementary JB$^*$-triples satisfy the Mazur--Ulam property}\label{sec: elementary JB* triples satisfy MUP}

Our goal in this section is to prove that every weakly compact JB$^*$-triple satisfies the Mazur--Ulam property. By this result we shall exhibit an example of a non-unital and non-commutative C$^*$-algebra containing no unitaries but satisfying the Mazur--Ulam property. The new geometric properties related to the facial structure of elementary JB$^*$-triples developed in the previous section will be the germ to prove our result. Compared with the techniques in \cite{MoriOza2018,BeCuFerPe2018,CuePer19} and \cite{KalPe2019}, we shall not base our proof on \cite[Lemma 6]{MoriOza2018} nor on \cite[Lemma 2.1]{FangWang06}. %Instead of this approach we shall employ the new geometric properties in section \ref{sec:new facial properties} to retrieve a constructive method which extends the ideas in \cite[proof of Theorem 3.13]{PeTan16}.

\begin{theorem}\label{t elementary JBstar triples satisfy the MUP} Every elementary JB$^*$-triple $K$ satisfy the Mazur--Ulam property, that is, for every real Banach space $Y$, every surjective isometry $\Delta: S(K)\to S(Y)$ admits a (unique) extension to a surjective real linear isometry from $K$ onto $Y$.
\end{theorem}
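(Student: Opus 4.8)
The goal is to produce a surjective real linear isometry $T\colon K\to Y$ extending $\Delta$. The reflexive case being settled by the cited results, assume $K$ is a non-reflexive elementary JB$^*$-triple, so every tripotent is non-complete and of finite rank and $K$ has infinite rank. The central object will be the family of real linear isometries $T^{v}_e\colon K_2(e)\to Y$ furnished by Proposition \ref{p new geometric facial property}, together with the coherence already proved in Proposition \ref{p first consequence from the new geometric facial property}: by part $(e)$ the isometry $T_e:=T^{v}_e$ does not depend on the auxiliary minimal tripotent $v\perp e$, and by part $(d)$ each $T_e$ agrees with $\Delta$ on $S(K_2(e))$. The first step is therefore to \emph{glue} these local isometries into a single real linear map defined on a dense subspace of $K$.

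\emph{Step 1: a well-defined global linear map on the finite-rank part.} Write $K_{00}$ for the (dense, by \cite[Remark 4.6]{BuChu}) linear span of the minimal tripotents of $K$. Any $x\in K_{00}$ lies in some $K_2(e)$ for a finite-rank tripotent $e$, so I set $T(x):=T_e(x)$. The crucial point is consistency: if $x\in K_2(e)\cap K_2(\tilde e)$, then choosing a finite-rank tripotent $g$ with $e,\tilde e$ both in $K_2(g)$ (available since $K$ has infinite rank, so one can always enlarge), I must check $T_e|_{K_2(e)}=T_g|_{K_2(e)}$. This is exactly the content of Proposition \ref{p first consequence from the new geometric facial property}$(d)$: both $T_e$ and $T_g$ coincide with $\Delta$ on $S(K_2(e))$, hence (being real linear) on all of $K_2(e)$. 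Thus $T$ is unambiguously defined on $K_{00}$. Real linearity is checked on each $K_2(g)$, where $T$ restricts to the linear map $T_g$; additivity across different Peirce-2 subspaces follows by passing to a common $K_2(g)$ containing both summands.

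\emph{Step 2: $T$ is isometric and extends to $K$.} Since each $T_e$ is a linear isometry and any two elements of $K_{00}$ can be housed in a common finite-dimensional $K_2(g)$ (again using infinite rank to find $g$), the map $T$ is isometric on $K_{00}$. As $K_{00}$ is dense in $K$, $T$ extends uniquely to a real linear isometry $T\colon K\to Y$. It remains to identify $T$ with $\Delta$ on the whole sphere and to prove surjectivity. For $x\in S(K)$, Corollary \ref{c affine on convex subsets of the sphere in reflexive} shows $\Delta$ is affine on convex subsets of $S(K)$; combined with the approximation of $x$ by elements $x_1\in S(K_2(e))$ (on which $T=T_e$ agrees with $\Delta$ by part $(d)$) exactly as in the proof of that Corollary, a standard $\varepsilon/4$ estimate gives $\|\Delta(x)-T(x)\|<4\varepsilon$ for all $\varepsilon$, whence $\Delta(x)=T(x)$ for every $x\in S(K)$. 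Thus $T|_{S(K)}=\Delta$.

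\emph{Step 3: surjectivity, and the main obstacle.} Since $\Delta$ is a surjection onto $S(Y)$ and $T$ is a linear extension of $\Delta$, one has $S(Y)=\Delta(S(K))=T(S(K))\subseteq T(K)$, so $T(K)\supseteq\bigcup_{t\ge 0}tS(Y)=Y$, giving surjectivity; uniqueness of the linear extension is automatic since $S(K)$ spans $K$. I expect the genuine difficulty to lie in \emph{Step 1}, the consistency of the gluing: one must be sure that whenever two minimal-tripotent decompositions of the same $x$ live in different Peirce-2 subspaces, the locally defined isometries truly agree, and that this survives taking sums. Proposition \ref{p first consequence from the new geometric facial property}$(d)$ is the key that makes this work, but carefully arranging the common enclosing tripotent $g$ and verifying real linearity of the glued map across overlapping Peirce spaces is where the real bookkeeping sits; the isometry and surjectivity steps are then comparatively routine.
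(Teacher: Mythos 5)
Your proposal is correct and follows essentially the same route as the paper: both glue the local isometries $T^{v}_{e}$ into a single real linear isometry on the dense linear span of the minimal tripotents, using Proposition \ref{p first consequence from the new geometric facial property}$(d)$ for consistency (the paper checks well-definedness directly via $T_{e_1}^{v_1}(x/\|x\|)=\Delta(x/\|x\|)=T_{e_2}^{v_2}(x/\|x\|)$ rather than through a common enclosing tripotent, but the mechanism is identical), then extend by density and identify the extension with $\Delta$ on $S(K)$. Your appeal to Corollary \ref{c affine on convex subsets of the sphere in reflexive} in Step 2 is not needed, since continuity of $\Delta$ and $T$ together with density already suffice, but this does not affect correctness.
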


\begin{proof} As we already commented in previous sections, if $K$ is reflexive the conclusion follows from \cite[Theorem 4.14, Proposition 4.15 and Remark 4.16]{BeCuFerPe2018} and \cite[Theorem 1.1 and Corollary 1.2]{KalPe2019}. As in the proof of Proposition \ref{p new geometric facial property} we shall assume that $K$ is non-reflexive (i.e. an infinite dimensional elementary JB$^*$-triple of type 1, 2 or 3), and hence every tripotent in $K$ is non-complete and of finite rank and $K$ has infinite rank.\smallskip

Let $\mathcal{F}=\mathcal{F}(K)$ denote the linear subspace of $K$ generated by all minimal tripotents in $K$. In our case $\mathcal{F}$ is a normed subspace of $K$ which is norm dense but non-closed. We shall define a mapping $T: \mathcal{F}\to Y.$ By definition every element $x$ in $\mathcal{F}$ can be written as a finite positive combination of mutually orthogonal minimal tripotents in $K$. We can therefore find a tripotent $e$ in $K$ such that $x\in K_2(e)$. Since $K$ has infinite rank we can always find a minimal tripotent $v\in K$ with $v\perp e$. We set $T(x) := T_{e}^v(x)$. Clearly $T(0) = 0 $.\smallskip

We claim that $T$ is well defined. Pick $0\neq x\in \mathcal{F}$. Suppose $e_1,e_2$ are tripotents in $K$ with $x\in K_2(e_j)$ for $j=1,2$ and
$v_1,v_2$ are two minimal tripotents in $K$ with $v_j \perp e_j$ for $j=1,2$. Proposition \ref{p first consequence from the new geometric facial property}$(d)$ implies that $$T_{e_1}^{v_1} \left(\frac{x}{\|x\|}\right)= \Delta\left(\frac{x}{\|x\|}\right) = T_{e_2}^{v_2} \left(\frac{x}{\|x\|}\right),$$ and hence $T$ is well defined.\smallskip

We shall next show that $T$ is linear. For each $x\in \mathcal{F}$, each pair of tripotents $e,v$ in $K$ with $v$ minimal, $v\perp e$ and $x\in K_2(e)$, and each real number $\alpha$, we have $\alpha x\in K_2(e)$ and $T(\alpha x) = T_e^v (\alpha x) = \alpha T_e^v(x) = \alpha T(x)$. Let us now take $x,y \in \mathcal{F}$. We can find a tripotent $e\in K$ such that $x,y, x+y\in K_2(e)$. Let  $v\in K$ be a minimal tripotent with $v\perp e$. By definition $$T(x+y ) = T_e^v (x+y) =  T_e^v (x) +  T_e^v (y) = T(x) + T(y).$$

Since $T: \mathcal{F}\to Y$ is a real linear isometry and $\mathcal{F}$ is norm dense in $K$, we can find a unique extension of $T$ to a surjective real linear isometry from $K$ to $Y$ which will be denoted by the same symbol $T$.\smallskip

We shall finally show that $T$ coincides with $\Delta$ on $S(K)$. By continuity and norm density of $\mathcal{F}$, it suffices to prove that $T$ coincides with $\Delta$ on $S(\mathcal{F})$. By definition, given $x\in S(\mathcal{F})$, tripotents $e,v$ in $K$ with $v\perp e$ and $x\in K_2(e)$, Proposition \ref{p first consequence from the new geometric facial property}$(d)$ assures that $T(x) = T_e^v (x) = \Delta(x)$, which concludes the proof.
\end{proof}

The main results in \cite{FerPe18Adv} prove that every surjective isometry between the unit spheres of two elementary JB$^*$-triples $K_1$ and $K_2$ can be extended to a surjective real linear isometry between $K_1$ and $K_2$ (see \cite[Theorems 4.4-4.7 and 4.9]{FerPe18Adv} and some precedents in \cite{PeTan16}). Our previous Theorem \ref{t elementary JBstar triples satisfy the MUP} offers an strengthened conclusion by showing that every surjective isometry from the unit sphere of an elementary JB$^*$-triple onto the unit sphere of any real Banach space extends to a surjective real linear isometry.\smallskip

The following straightforward consequence is interesting by itself.

\begin{corollary}\label{c KH satisfies the MUP} For each complex Hilbert space $H$, the space $K(H)$ of all compact linear operators on $H$ satisfies the Mazur--Ulam property.
\end{corollary}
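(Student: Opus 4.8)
The plan is to reduce the statement directly to Theorem \ref{t elementary JBstar triples satisfy the MUP} by recognizing that $K(H)$ is nothing but an elementary JB$^*$-triple. First I would recall that the space $K(H)$ of compact operators is a norm-closed subspace of $B(H)$ that is closed under the triple product introduced in \eqref{eq triple product on Cstaralg}, and hence is itself a JB$^*$-triple. Taking $H_1=H_2=H$ in the definition of the elementary JB$^*$-triples of \cite{BuChu}, the space $K(H)=K(H,H)$ is precisely the elementary JB$^*$-triple $K_1$ of type $1$.

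With this identification in hand, the conclusion is immediate: Theorem \ref{t elementary JBstar triples satisfy the MUP} asserts that every elementary JB$^*$-triple satisfies the Mazur--Ulam property, so any surjective isometry $\Delta:S(K(H))\to S(Y)$ onto the unit sphere of an arbitrary real Banach space $Y$ extends (uniquely) to a surjective real linear isometry from $K(H)$ onto $Y$.

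Since the entire content has already been absorbed into the preceding theorem, there is no genuine obstacle here; the only point to verify is the structural identification of $K(H)$ as a type $1$ elementary JB$^*$-triple, which is a matter of unwinding the definitions from Section \ref{subsec:background}. It is worth emphasizing, however, that when $H$ is infinite dimensional this produces a C$^*$-algebra whose closed unit ball has \emph{no} extreme points, so the result cannot be recovered from the earlier strong-Mankiewicz-based arguments and genuinely depends on the new facial machinery developed in Section \ref{sec:new facial properties}.
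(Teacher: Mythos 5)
Your proposal is correct and is exactly the route the paper intends: the corollary is stated as a straightforward consequence of Theorem \ref{t elementary JBstar triples satisfy the MUP}, obtained by observing that $K(H)=K(H,H)$ is the elementary JB$^*$-triple of type $1$ with $H_1=H_2=H$. Nothing further is needed.
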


\section{More on the strong Mankiewicz property}

In this section we pursue a result showing that every weakly compact JB$^*$-triple satisfies the strong Mankiewicz property. We begin by establishing a technical result which is valid for an abstract class of Banach spaces including all weakly compact JB$^*$-triples.

\begin{proposition}\label{p strong Mankiewicz property for c0 sums} Let $(X_{i})_{i\in I}$ be a family of Banach spaces, and let $X= \displaystyle \bigoplus_{i\in I}^{c_{0}} X_i$. Suppose that the following hypotheses hold: for each $x,y\in X$ and each $\varepsilon>0$ there exist a finite subset $F\subseteq I$ {\rm(}depending on $x,y$ and $\varepsilon>0${\rm)}, closed subspaces $Z_i\subseteq X_i$ and elements $a_i,b_i\in Z_i$ {\rm(}$i\in F${\rm)} such that
\begin{enumerate}[$(1)$]\item For each $i\in F$ there exists a subset $M_i\subseteq \mathcal{B}_{X_i}$ satisfying $$\mathcal{B}_{Z_i} = \left\{ x_i \in \mathcal{B}_{X_i} : \|x_i -m_i\|\leq 1 \hbox{ for all } m_i \in M_i \right\};$$
\item $\|x - (a_i)_{i\in F}\|, \|y - (b_i)_{i\in F}\| <\varepsilon$ {\rm\Big(}we obviously regard $\displaystyle \bigoplus_{i\in F}^{\ell_{\infty}} Z_i$ as a closed subspace of $X${\rm\Big)};
\item The closed unit ball of the space $\displaystyle \bigoplus_{i\in F}^{\ell_{\infty}} Z_i$ satisfies the strong Mankiewicz property.
 \end{enumerate}  Then  every convex body $K\subset X$ satisfies the strong Mankiewicz property.
\end{proposition}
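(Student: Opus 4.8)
The plan is to fix a surjective isometry $\Delta\colon K\to L$ onto a convex set $L$ and prove it is affine. Since translations are affine, I may assume $0\in\operatorname{int}(K)$ and fix $r>0$ with $r\mathcal B_X\subseteq K$. Because $\Delta$ is continuous (being an isometry) and a continuous map that preserves midpoints on a convex set is affine, it suffices to prove the midpoint identity $\Delta(\tfrac{x+y}{2})=\tfrac12(\Delta(x)+\Delta(y))$ for all $x,y\in K$; and since a continuous map satisfying this identity on a neighbourhood of every interior point of the connected convex set $K$ is already affine, I may restrict to $x,y$ lying in a small ball about a fixed interior point $p$. For such $x,y$ and a given $\varepsilon>0$, the hypotheses furnish a finite set $F$, subspaces $Z_i$, and approximants $a,b$ in $W:=\bigoplus^{\ell_\infty}_{i\in F}Z_i$ (viewed inside $X$) with $\|x-a\|,\|y-b\|<\varepsilon$, which one may arrange to lie in $K$ using $0\in\operatorname{int}(K)$. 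A three-term triangle estimate, together with $\|\tfrac{x+y}{2}-\tfrac{a+b}{2}\|<\varepsilon$ and the isometric character of $\Delta$, reduces the midpoint identity for $x,y$, up to an error $2\varepsilon$, to the affinity of $\Delta$ on the segment $[a,b]\subseteq W$; letting $\varepsilon\to0$ then yields the local identity. So everything is reduced to proving that $\Delta$ is affine on segments with nearby endpoints in $W\cap K$.

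For this I would produce a scaled, translated copy $C=c+\rho\,\mathcal B_W\subseteq K$ of the closed unit ball of $W$, with centre $c\in W$ and radius $\rho$ chosen so that $C$ contains $a$ and $b$ (this fits inside $K$ because the endpoints are close to the interior point $p$), and then run two steps. \emph{First}, I would show that $\Delta(C)$ is convex. Here condition $(1)$ is the crucial device: the membership relations $z_i\in\mathcal B_{Z_i}$ that carve $W$ out of $X$ can be rewritten, on a ball sitting inside $K$, as a family of norm inequalities $\|z-c_k\|\le\rho_k$ whose centres $c_k$ are suitable scalings and translates of the elements of the sets $M_i$ and therefore again lie in $K$; since $\Delta$ is a surjective isometry and each $c_k$ belongs to its domain, it carries an intersection $K\cap\bigcap_k\{z:\|z-c_k\|\le\rho_k\}$ onto $L\cap\bigcap_k\{w:\|w-\Delta(c_k)\|\le\rho_k\}$, an intersection of convex sets, hence convex. \emph{Second}, $C$ is the image of $\mathcal B_W$ under a similarity, so after rescaling the surjective isometry $\Delta|_C$ becomes a surjective isometry of $\mathcal B_W$ onto a convex set; condition $(3)$, the strong Mankiewicz property of $\mathcal B_W$, then forces it to be affine, whence $\Delta$ is affine on $C$ and in particular on $[a,b]$. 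This closes the local argument, and the local-to-global passage noted above finishes the proof.

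The step I expect to be the genuine obstacle is the convexity of $\Delta(C)$ in the $c_0$-setting, that is, the honest reduction to the \emph{finite} $\ell_\infty$-sum $W$. Condition $(1)$ pins down only the coordinates indexed by $F$, and distances to finitely many centres cannot force the coordinates outside $F$ to vanish; a naive metric description of $C$ therefore cuts out, a priori, the larger space $V$ carrying $Z_i$ on $F$ and the full $X_i$ off $F$, rather than $W$ itself, whereas the strong Mankiewicz property is assumed only for $W$. The heart of the matter is thus to squeeze out these off-block coordinates: one must exploit the $c_0$-decay of the tails of $x$ and $y$ and the freedom to enlarge $F$ as $\varepsilon\to0$, so that the metrically recognizable block becomes, in the limit, an honest ball of $W$ on which condition $(3)$ applies while $\Delta$ still maps it onto a convex set. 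Condition $(1)$ is precisely what makes each $\mathcal B_{Z_i}$ metrically recognizable, so that it interacts correctly with the distance-preserving map $\Delta$ once the tails have been controlled.
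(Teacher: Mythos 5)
Your overall architecture is workable, but the proof has a genuine gap exactly at the point you yourself flag: you never actually show that the metrically recognizable set is a ball of $W=\bigoplus_{i\in F}^{\ell_\infty}Z_i$ rather than of the larger space carrying $Z_i$ on $F$ and all of $X_j$ off $F$. Your proposed remedy --- ``exploit the $c_0$-decay of the tails and the freedom to enlarge $F$ as $\varepsilon\to 0$'' --- does not close it: however large you take the finite set $F$, the complementary coordinates still range over full infinite-dimensional factors, and no limiting procedure is described that would let condition $(3)$ (assumed only for finite $\ell_\infty$-sums of the $Z_i$) apply. The difficulty also stems from your tacit assumption that the test family must be finite. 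The paper resolves the issue with an \emph{infinite} family of test points: it sets $M=\{m=(m_i)_i\in\mathcal B_X : m_i\in M_i \ \forall i\in F,\ m_j\in\mathcal B_{X_j}\ \forall j\notin F\}$ and observes that $\|z_j-w_j\|\le 1$ for \emph{all} $w_j\in\mathcal B_{X_j}$ forces $z_j=0$ (take $w_j=-z_j/\|z_j\|$), while on the $F$-coordinates hypothesis $(1)$ forces $z_i\in\mathcal B_{Z_i}$. Hence $\Delta(\mathcal B_Z)=L_1:=\{y\in L:\|y-\Delta(m)\|\le 1\ \forall m\in M\}$ is convex, and $(3)$ applies directly. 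Without this (or an equivalent device), your first step --- convexity of $\Delta(C)$ for an honest copy $C$ of $\mathcal B_W$ --- is unproven, and the argument does not go through.

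A secondary structural difference: the paper first invokes \cite[Lemma 4]{MoriOza2018} to reduce from an arbitrary convex body $K$ to the closed unit ball $\mathcal B_X$, after which the test points $m\in M$ automatically lie in the domain of $\Delta$ and no translation/rescaling bookkeeping is needed; the final step is then a direct $2\varepsilon$-estimate on $\|\Delta(tx+(1-t)y)-t\Delta(x)-(1-t)\Delta(y)\|$ rather than your local midpoint-identity-plus-connectedness argument. Your route through a general convex body is not wrong in principle, but it adds the burden of checking that all the centres of your defining balls (scaled translates of elements of $M_i$, and of the whole balls $\mathcal B_{X_j}$ for $j\notin F$) lie in $K$, which you have not verified; the reduction to $\mathcal B_X$ makes all of this immediate.
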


\begin{proof} By \cite[Lemma 4]{MoriOza2018} it suffices to prove that the closed unit ball of $X$ satisfies the strong Mankiewicz property. To this end let $\Delta :\mathcal{B}_{X}\to L$ be a surjective isometry, where $L$ is a convex subset in a normed space $Y$. We shall show that $\Delta$ is affine.\smallskip

Let us take $x,y\in \mathcal{B}_{X}$ and $t\in (0,1)$. Fix an arbitrary  $\varepsilon >0.$ It follows from our hypotheses that there exist a finite set $F\subseteq I$, closed subspaces $Z_i\subseteq X_i$ and elements $a_i,b_i\in \mathcal{B}_{Z_i}$ ($i\in F$) such that the closed unit ball of the space $\displaystyle Z= \bigoplus_{i\in F}^{\ell_{\infty}} Z_i$ satisfies the strong Mankiewicz property,
$\|x - (a_i)_{i\in F}\| <\varepsilon$ and $\|y - (b_i)_{i\in F}\| <\varepsilon$.\smallskip

We also know the existence of subsets $M_i\subseteq \mathcal{B}_{X_i}$ ($i\in I$) satisfying $(1)$. Let $M\subseteq X$ denote the set given by $$M:= \left\{ x=(x_i)_i\in \mathcal{B}_X : x_i\in M_i \hbox{ for all } i \in F,\ x_j\in \mathcal{B}_{X_j} \hbox{ for } j\in I\backslash F\right\}.$$ We also set $$L_1 :=\left\{ y\in L : \|y - \Delta(m)\|\leq 1 \hbox{ for all } m\in M \right\}.$$ Having in mind that $L$ is convex, it is easy to see that $L_1$ also is convex.\smallskip

We claim that \begin{equation}\label{eq Delta BZ equals L1} \Delta(\mathcal{B}_{Z}) = L_1.
\end{equation} 

Indeed, by the assumptions each $z\in \mathcal{B}_{Z}$ satisfies that $\|z-m\|\leq 1$ for all $m\in M$, and hence $\|\Delta(z) -\Delta (m)\|\leq 1$ for all $m\in M$. This proves that $\Delta(\mathcal{B}_{Z}) \subseteq  L_1$.\smallskip

Take now, $y\in L_1$. Since $\Delta$ is surjective there exists (a unique) $z=(z_i)_i\in \mathcal{B}_{X}$ with $\Delta(z) = y.$ Since $\Delta$ is an isometry and $y\in L_1$ we deduce that $$\|z-m\|=\|\Delta(z) -\Delta (m)\|\leq 1,\hbox{ for all } m\in M,$$ in particular, $$\hbox{$\|z_i -m_i\|\leq 1,$ for all $m_i\in M_i$ and for all $i\in F$}$$ and $$\hbox{$\|z_i - w_i\|\leq 1,$ for all $w_i\in \mathcal{B}_{X_i}$ and all $i\in I\backslash F$.}$$ It follows from the hypothesis $(1)$ that $z_i\in \mathcal{B}_{Z_i}$ for all $i\in F,$ and clearly $z_i =0$ for all $i\in I\backslash F$. Therefore $z\in \mathcal{B}_{Z}$ which finishes the proof of \eqref{eq Delta BZ equals L1}.\smallskip

We deduce from \eqref{eq Delta BZ equals L1} and the preceding comments that $\Delta|_{\mathcal{B}_{Z}}:\mathcal{B}_{Z}\to L_1$ is a surjective isometry and $L_1$ is a convex subset of $Y$. We apply now that the closed unit ball of the space $Z$ satisfies the strong Mankiewicz property to deduce that $\Delta|_{\mathcal{B}_{Z}}$ is affine, and thus $$ \Delta \left(t (a_i)_{i\in F}+ (1-t) (b_i)_{i\in F} \right) = t \Delta \left( (a_i)_{i\in F} \right) +  (1-t)  \Delta \left((b_i)_{i\in F} \right),$$ and consequently $$\begin{aligned}\|& \Delta (t x + (1-t) y) - (t \Delta (x) + (1-t) \Delta(y)) \| \\ & \leq \|\Delta (t x + (1-t) y) - \Delta \left(t (a_i)_{i\in F}+ (1-t) (b_i)_{i\in F} \right) \| \\
&+ t  \| \Delta \left( (a_i)_{i\in F}\right)- \Delta (x)  \| + (1-t) \| \Delta \left((b_i)_{i\in F} \right) - \Delta(y) \| < 2\varepsilon.
\end{aligned}$$

The arbitrariness of $\varepsilon>0$ implies that $\Delta (t x + (1-t) y) = t \Delta (x) + (1-t) \Delta(y)$, which concludes the proof.
\end{proof}

An appropriate framework to apply the previous proposition is provided by weakly compact JB$^*$-triples.

\begin{corollary}\label{c weakly compact JB* triples satisfy the SMP} Every weakly compact JB$^*$-triple $E$ satisfies the hypotheses of the previous Proposition \ref{p strong Mankiewicz property for c0 sums}. Consequently every convex body $K\subset E$ satisfies the strong Mankiewicz property.
\end{corollary}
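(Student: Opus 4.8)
The plan is to realize the weakly compact JB$^*$-triple $E$ as a $c_0$-sum $E = \bigoplus_{i\in I}^{c_0} K_i$ of elementary JB$^*$-triples (this is the equivalent reformulation $(c)$ of weak compactness recalled after Lemma \ref{l biorthogonal of a non-complete tripotent in an elementary}) and then to verify, essentially verbatim, the three hypotheses of Proposition \ref{p strong Mankiewicz property for c0 sums} with $X_i = K_i$; the assertion about convex bodies is then immediate from that proposition. So I fix $x = (x_i)_i,\ y = (y_i)_i \in E$ and $\varepsilon > 0$. First I would produce the finite set $F$: since $x,y$ lie in a $c_0$-sum, the set $F := \{ i\in I : \|x_i\| \ge \varepsilon/2 \text{ or } \|y_i\| \ge \varepsilon/2 \}$ is finite, and for $i\in I\setminus F$ one has $\|x_i\|, \|y_i\| < \varepsilon/2$. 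This already secures the tail estimate needed for hypothesis $(2)$.

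Next, for each $i\in F$ I would choose the closed subspace $Z_i$, the approximants $a_i, b_i$ and the test set $M_i$, splitting into two cases according to the dimension of $K_i$. If $K_i$ is finite dimensional, take $Z_i = K_i$, $a_i = x_i$, $b_i = y_i$ and $M_i = \emptyset$; then hypothesis $(1)$ holds with the vacuous constraint and the approximation is exact. If $K_i$ is infinite dimensional (hence a non-reflexive elementary JB$^*$-triple in which every tripotent is non-complete and of finite rank), I would invoke \cite[Remark 4.6]{BuChu}, exactly as in the proof of Corollary \ref{c affine on convex subsets of the sphere in reflexive}, to obtain a single finite rank (hence non-complete) tripotent $e_i\in K_i$ together with elements $a_i, b_i$ in $Z_i := (K_i)_2(e_i)$ satisfying $\|x_i - a_i\|, \|y_i - b_i\| < \varepsilon/2$, and set $M_i := \{e_i\}^{\perp}_{min}$. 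Hypothesis $(1)$ is then precisely the content of Lemma \ref{l sphere M minimal tripotents orhtogonal to e} applied to the non-complete tripotent $e_i$ in the elementary JB$^*$-triple $K_i$, and hypothesis $(2)$ follows by combining these componentwise bounds over the finite set $F$ with the tail estimate, giving $\|x - (a_i)_{i\in F}\|, \|y - (b_i)_{i\in F}\| < \varepsilon$.

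It remains to verify hypothesis $(3)$, and this is where I would use the finite dimensionality of the pieces: in both cases $Z_i$ is a finite dimensional JB$^*$-subtriple of $K_i$ (for the infinite dimensional summands $(K_i)_2(e_i)$ is a finite dimensional JBW$^*$-triple, as already noted in the proof of Proposition \ref{p new geometric facial property}), and hence the finite $\ell_\infty$-sum $Z = \bigoplus_{i\in F}^{\ell_\infty} Z_i$ is again a finite dimensional JBW$^*$-triple. By \cite[Corollary 2.2]{BeCuFerPe2018} the closed unit ball of any JBW$^*$-triple enjoys the strong Mankiewicz property, which is exactly $(3)$. With the three hypotheses established, Proposition \ref{p strong Mankiewicz property for c0 sums} yields the strong Mankiewicz property for every convex body of $E$.

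The only genuinely delicate point, which I would flag as the main obstacle, is the dichotomy forced by the non-completeness requirement in Lemma \ref{l sphere M minimal tripotents orhtogonal to e}: the clean description of $\mathcal{B}_{(K_i)_2(e_i)}$ as the set of elements at distance at most $1$ from all orthogonal minimal tripotents, needed for $(1)$, is available only for non-complete $e_i$, whereas a finite dimensional elementary JB$^*$-triple may carry a unitary (complete) tripotent whose Peirce-$2$ space is the whole of $K_i$ (e.g.\ $\mathbb{C}$ itself). Treating such summands separately via $Z_i = K_i$ with $M_i = \emptyset$ circumvents this, while the infinite dimensional summands are exactly the regime in which every finite rank tripotent is automatically non-complete, so that the joint Peirce-$2$ approximation of $x_i$ and $y_i$ furnished by \cite[Remark 4.6]{BuChu} feeds directly into Lemma \ref{l sphere M minimal tripotents orhtogonal to e}.
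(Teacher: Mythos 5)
Your overall strategy is the paper's: decompose $E$ as a $c_0$-sum of elementary JB$^*$-triples, truncate to a finite set $F$, and verify hypotheses $(1)$--$(3)$ of Proposition \ref{p strong Mankiewicz property for c0 sums} summand by summand, using Lemma \ref{l sphere M minimal tripotents orhtogonal to e} for $(1)$ and the JBW$^*$-triple case for $(3)$. The one genuine flaw is where you draw the case boundary. You split the summands into \emph{finite dimensional} versus \emph{infinite dimensional}, and for the infinite dimensional ones you assert that they are ``non-reflexive elementary JB$^*$-triples in which every tripotent is non-complete and of finite rank.'' That is false: an elementary JB$^*$-triple of type $4$ is by definition the whole spin factor, which may be infinite dimensional while remaining reflexive; it has rank two, so it contains complete (indeed unitary) tripotents, and a generic pair $x_i,y_i$ there cannot be approximated inside the Peirce-$2$ space of any single \emph{non-complete} tripotent (a non-zero non-complete tripotent in a spin factor is minimal, and its Peirce-$2$ space is one-dimensional). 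For such a summand your construction of $e_i$ breaks down, Lemma \ref{l sphere M minimal tripotents orhtogonal to e} is not applicable, and your claim in step $(3)$ that each $Z_i$ is finite dimensional fails. Your closing remark that ``the infinite dimensional summands are exactly the regime in which every finite rank tripotent is automatically non-complete'' is precisely the false statement.

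The repair is exactly the paper's dichotomy: split into \emph{reflexive} versus \emph{non-reflexive} summands. For reflexive $K_i$ (finite dimensional, or infinite dimensional of type $4$) take $Z_i=K_i$, $a_i=x_i$, $b_i=y_i$ and $M_i=\{0\}$ (or $\emptyset$); for non-reflexive $K_i$ --- which are precisely the infinite dimensional elementary triples of type $1$, $2$ or $3$, where your tripotent argument is valid --- proceed as you do. Then $\bigoplus_{i\in F}^{\ell_\infty} Z_i$ is reflexive rather than finite dimensional, but it is still a JBW$^*$-triple, so hypothesis $(3)$ still follows from \cite[Corollary 2.2]{BeCuFerPe2018}. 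With that correction the remainder of your argument (the choice of $F$, the tail estimate for $(2)$, and the application of Lemma \ref{l sphere M minimal tripotents orhtogonal to e} for $(1)$) coincides with the paper's proof.
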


\begin{proof} As we commented in subsection \ref{subsec:background} every weakly compact JB$^*$-triple coincides with a $c_0$-sum of a family $\{K_i: i\in I\}$ of elementary JB$^*$-triples (cf. \cite{BuChu}). Given $x,y\in E$ and $\varepsilon>0$ there exist a finite subset $F\subseteq I$ satisfying $\| x - (x_i)_{i\in F}\|, \| y - (y_i)_{i\in F}\|< \frac{\varepsilon}{2}$.\smallskip

Fix an arbitrary $i\in F$. If $K_i$ is reflexive (i.e. it is finite dimensional or an elementary JB$^*$-triple of type 4), we know that $K_i$ is a JBW$^*$-triple. In this case we take $Z_i = K_i$, $a_i = x_i,$ $b_i = y_i,$ and $M_i = \{0\}$. Clearly, $$\mathcal{B}_{K_i} = \{ x_i \in K_i : \| x_i -0 \|\leq 1\}.$$

Suppose next that $K_i$ is not reflexive (i.e. an infinite dimensional elementary JB$^*$-triple of type 1, 2 or 3). By \cite[Remark 4.6]{BuChu} and/or basic theory of compact operators we can find a finite rank (and hence non-complete) tripotent $e_i$ (i.e. a tripotent which is a finite sum of mutually orthogonal minimal tripotents in $K_i$) such that $\|x_i -P_2(e_i) (x_i)\|,\|y_i -P_2(e_i) (y_i)\| <\frac{\varepsilon}{2}$. The subtriple $Z_i = (K_i)_2 (e_i)$ is finite dimensional and thus a JBW$^*$-triple. Take $a_i = P_2(e_i) (x_i),$ $b_i = P_2(e_i) (y_i)\in Z_i$. Set $M_i:=\{e_i\}_{min}^{\perp} \cap K_i =\{ v \hbox{ minimal tripotent in } K_i : v\perp e_i\}\subseteq \mathcal{B}_{K_i}$. Lemma \ref{l sphere M minimal tripotents orhtogonal to e} assures that $$\mathcal{B}_{Z_i}= \mathcal{B}_{ (K_i)_2 (e_i)} = \left\{ x_i \in \mathcal{B}_{K_i} : \|x_i -m_i\|\leq 1 \hbox{ for all } m_i \in M_i= \{e_i\}_{min}^{\perp} \cap K_i \right\}.$$%\smallskip

\noindent By construction, $$\|x-(a_i)_{i\in F}\|\leq  \| x - (x_i)_{i\in F}\| +  \| (x_i)_{i\in F}- (a_i)_{i\in F}\|< \varepsilon.$$

Finally, for each $i\in F$, $Z_i$ is finite dimensional or reflexive, therefore $\displaystyle \bigoplus_{i\in F}^{\ell_{\infty}} Z_i$ is a JBW$^*$-triple, and thus its closed unit ball satisfies the strong Mankiewicz property by \cite[Corollary 1.2]{KalPe2019}.
\end{proof}

\begin{remark}\label{remark compact C* algebras satisfy strong Mankiewicz property} It is worth to note that every compact C$^*$-algebra of the form $\displaystyle A= \bigoplus_{i\in I}^{c_0} K(H_i),$ where the $H_i$'s are complex Hilbert spaces {\rm(}cf. \cite[Theorem 8.2]{Alex}{\rm)}. Obviously, $A$ is a weakly compact JB$^*$-triple, and hence every convex body $K\subset A$ satisfies the strong Mankiewicz property.
\end{remark}

%\section{Weakly compact JB$^*$-triples satisfy the Mazur--Ulam property}

We have already developed enough tools to address the question whether every weakly compact JB$^*$-triple satisfies the Mazur--Ulam property.

\begin{theorem}\label{t weakly compact JB*-triples satisfy MUP} Every weakly compact JB$^*$-triple $E$ satisfies the Mazur--Ulam property, that is,  for every real Banach space $Y$, every surjective isometry from $ S(E)$ onto $S(Y)$ admits a (unique) extension to a surjective real linear isometry from $E$ onto $Y$.
\end{theorem}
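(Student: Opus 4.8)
**Proof proposal for Theorem \ref{t weakly compact JB*-triples satisfy MUP}.**

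The plan is to combine the two pillars already assembled: the strong Mankiewicz property for weakly compact JB$^*$-triples (Corollary \ref{c weakly compact JB* triples satisfy the SMP}) and the facial machinery from Section \ref{sec:new facial properties}, which handles the single elementary summands. Write $E = \bigoplus_{i\in I}^{c_0} K_i$ as a $c_0$-sum of elementary JB$^*$-triples, and let $\pi_i\colon E\to K_i$ be the coordinate projections. Let $\Delta\colon S(E)\to S(Y)$ be a surjective isometry. Following the strategy of Theorem \ref{t elementary JBstar triples satisfy the MUP}, I would build the candidate linear extension $T\colon E\to Y$ on the dense subspace $\mathcal{F}=\mathcal{F}(E)$ spanned by the minimal tripotents of $E$, then argue that $T$ agrees with $\Delta$ on the sphere and extends by density and continuity to the desired surjective real linear isometry.

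\textbf{Step 1: local affinity on convex subsets of the sphere.} The first key point is to upgrade Corollary \ref{c affine on convex subsets of the sphere in reflexive} (elementary case) to the $c_0$-sum setting: for any non-empty convex set $\mathcal{C}\subseteq S(E)$, the restriction $\Delta|_{\mathcal{C}}$ is affine. For two points $x,y\in S(E)$ and $\varepsilon>0$, I would use the $c_0$-structure (Corollary \ref{c weakly compact JB* triples satisfy the SMP} and \cite[Remark 4.6]{BuChu}) to approximate $x,y$ by $x_1,y_1\in S(Z)$, where $Z=\bigoplus_{i\in F}^{\ell_\infty} Z_i$ is a finite $\ell_\infty$-sum of finite-dimensional (hence JBW$^*$-triple) Peirce-$2$ subspaces $Z_i=(K_i)_2(e_i)$. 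Because $\mathcal{B}_{Z}$ satisfies the strong Mankiewicz property (it is a JBW$^*$-triple, \cite[Corollary 1.2]{KalPe2019}), and because the face-intersection argument of Proposition \ref{p new geometric facial property} together with Lemma \ref{l biorthogonal of a non-complete tripotent in a wk JBstar} identifies $\Delta(u+\mathcal{B}_{Z})$ as an intersection of norm-closed convex ``slices'' of an intersection face, the image $\Delta(u+\mathcal{B}_{Z})$ is convex and $\Delta$ restricted to it is affine. The same telescoping estimate as in Corollary \ref{c affine on convex subsets of the sphere in reflexive} then forces $t\Delta(x)+(1-t)\Delta(y)=\Delta(tx+(1-t)y)$ by letting $\varepsilon\to 0$.

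\textbf{Step 2: a well-defined linear isometry on $\mathcal{F}$.} Every $x\in\mathcal{F}$ lies in some $Z=\bigoplus_{i\in F}^{\ell_\infty}(K_i)_2(e_i)$ with each $e_i$ a finite-rank tripotent, and one can always locate a minimal tripotent $v\in E$ orthogonal to $e=\sum_{i\in F}e_i$ since $E$ has infinite rank in the non-reflexive summands. Using the real linear isometry $T_e^v$ produced by Proposition \ref{p new geometric facial property} (applied to the JBW$^*$-triple $Z$), I would set $T(x):=T_e^v(x)$. As in Theorem \ref{t elementary JBstar triples satisfy the MUP}, the crucial independence-of-choices is Proposition \ref{p first consequence from the new geometric facial property}$(d)$, which pins $T_e^v$ to $\Delta$ on $S(Z)$ regardless of $e$ and $v$; this simultaneously gives well-definedness and the agreement $T|_{S(\mathcal{F})}=\Delta|_{S(\mathcal{F})}$. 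Additivity follows by choosing a single $e$ (hence single $Z$) containing $x$, $y$, and $x+y$, and homogeneity is immediate from the linearity of $T_e^v$. Thus $T$ is a real linear isometry on $\mathcal{F}$, which is norm dense in $E$, so it extends uniquely to a surjective real linear isometry $T\colon E\to Y$ (surjectivity onto $Y$ coming from surjectivity of $\Delta$ together with density). Continuity and density then yield $T|_{S(E)}=\Delta$, and uniqueness of the linear extension is standard.

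\textbf{Main obstacle.} The delicate point is Step 1, specifically verifying that for the finite $\ell_\infty$-sum $Z$ the image $\Delta(u+\mathcal{B}_Z)$ is genuinely a \emph{convex} intersection face and that the translation-and-restriction diagram as in \eqref{eq diagram Delta u e} applies. In the single-summand case this rested on Corollary \ref{c biorthogonal in an elementary through the orthogonal minimal tripotents} and the identity $\{e\}^{\perp\perp}=K_2(e)$; in the $c_0$-sum I must instead invoke Lemma \ref{l biorthogonal of a non-complete tripotent in a wk JBstar}, which requires that $\pi_i(e)$ be \emph{non-complete} in every summand $K_i$ it meets. I would ensure this by choosing the approximating tripotent $e$ to have finite rank in each non-reflexive $K_i$ (automatic, since finite-rank tripotents are non-complete in infinite-dimensional elementary triples) and by leaving the reflexive summands handled via $M_i=\{0\}$ exactly as in Corollary \ref{c weakly compact JB* triples satisfy the SMP}. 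Once the face-slice description \eqref{eq image of BM + pm} is established at the level of $Z$, the remainder is a routine repackaging of the elementary arguments.
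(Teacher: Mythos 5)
Your proposal takes a genuinely different route from the paper, and as written it contains a gap. The paper's proof of Theorem \ref{t weakly compact JB*-triples satisfy MUP} does not transplant the Peirce-2 slice machinery of Section \ref{sec:new facial properties} to the $c_0$-sum. Instead it exploits the fact that, once Corollary \ref{c weakly compact JB* triples satisfy the SMP} is available, the obstruction described after Lemma \ref{l intersection faces MoriOzawa L8} disappears: for each non-zero tripotent $u$ the face $F_u=u+\mathcal{B}_{E_0(u)}$ is mapped by $\Delta$ onto an intersection face (hence a convex set), the Peirce-0 space $E_0(u)$ is again a weakly compact JB$^*$-triple, so $\mathcal{B}_{E_0(u)}$ satisfies the strong Mankiewicz property and one gets directly a real linear isometry $T_u$ on $E_0(u)$ with $\Delta(u+x)=T_u(x)+\Delta(u)$. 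From this one derives $\Delta(u)+\Delta(w)=\Delta(u+w)$ for orthogonal tripotents, and the global operator $T$ is assembled from just \emph{two} such isometries $T_{u_1},T_{u_2}$ attached to tripotents in complementary summands of a decomposition $E=A\oplus^{\ell_{\infty}}B$, the identity $T|_{S(E)}=\Delta$ being checked on spectral resolutions $x=\sum_{n}\lambda_n e_n$ with $\lambda_1=1$. This is considerably shorter than rebuilding Propositions \ref{p new geometric facial property} and \ref{p first consequence from the new geometric facial property} over finite $\ell_\infty$-sums, which is what your plan requires.

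The concrete gap is in your Step 1. The identification \eqref{eq image of BM + pm} in the $c_0$-sum setting would rest on Lemma \ref{l biorthogonal of a non-complete tripotent in a wk JBstar} (via the analogue of Corollary \ref{c biorthogonal in an elementary through the orthogonal minimal tripotents}), and that lemma requires $\pi_i(e)$ to be non-complete in \emph{every} summand. You propose to secure this by choosing finite-rank components, but a weakly compact JB$^*$-triple may contain reflexive rank-one summands --- e.g.\ an infinite-dimensional Hilbert space viewed as $K(\mathbb{C},H)$ --- in which every non-zero tripotent is simultaneously minimal and complete. There no non-complete tripotent is available to carry the approximation $P_2(e_i)(x_i)$, no minimal tripotent of that summand is orthogonal to $e_i$, and the intersection of the sets $\mathcal{D}_1^v$ then recovers $u+\mathcal{B}_{\{e\}^{\perp\perp}}$ rather than $u+\mathcal{B}_{E_2(e)}$ (compare the counterexample recalled on page \pageref{eq counterexample biorthog complement}). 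Your fallback of treating reflexive summands ``via $M_i=\{0\}$'' is borrowed from the strong Mankiewicz argument and does not mesh with the facial one: with $Z_i=K_i$ the set $u+\mathcal{B}_Z$ is no longer a Peirce-2 slice of the face $F_u$, so Proposition \ref{p new geometric facial property} cannot be quoted. The argument could likely be repaired by systematically replacing $E_2(e)$ with $\{e\}^{\perp\perp}$ and verifying its reflexivity, but the claim that the remainder is ``a routine repackaging of the elementary arguments'' fails precisely at this point; the paper's Peirce-0 route avoids the issue altogether.
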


\begin{proof} We begin with an observation. For each non-zero tripotent $u\in E,$ we consider the proper face $F_u = u + \mathcal{B}_{E_0(u)}$. Lemma \ref{l intersection faces MoriOzawa L8} assures that $\Delta(F_u)$ is an intersection face of $S(Y)$, in particular a convex subset. Let us observe that $E_0(u)$ is a weakly compact JB$^*$-triple. Corollary \ref{c weakly compact JB* triples satisfy the SMP} implies that $\mathcal{B}_{E_0(u)}$ satisfies the strong Mankiewicz property. We consider the next commutative diagram
$$ \begin{tikzcd}  F_u =u + \mathcal{B}_{E_0(u)} \arrow[swap]{d}{\tau_{-u}} \arrow{r}{\Delta} &  \Delta\left( u + \mathcal{B}_{E_0(u)} \right) \\
  \mathcal{B}_{E_0(u)}  \arrow[dashrightarrow]{r}{\Delta_{u}} &  \Delta\left( u + \mathcal{B}_{E_0(u)} \right)-\Delta(u) \arrow[u, "\tau_{\Delta(u)}"]
\end{tikzcd}
$$
Since $E_0(u)$ satisfies the strong Mankiewicz property and $\Delta_{u}$ is a surjective isometry from $\mathcal{B}_{E_0(u)} $ onto a convex set, we can guarantee the existence of a linear isometry $T_u: E_0(u) \to Y$ satisfying
\begin{equation}\label{eq definition of Tu i nlast theorem} \Delta (u + x) = T_u (x) + \Delta (u), \hbox{ for all } x\in \mathcal{B}_{E_0(u)},
\end{equation} (cf. \cite{MoriOza2018}). In particular $\Delta|_{F_u}$ is affine.\smallskip

We claim that \begin{equation}\label{eq main property of Tu in last theorem} \Delta (w) = T_u (w), \hbox{ for every non-zero tripotents } u,w \in E \hbox{ with } w\in {E_0(u)} .
\end{equation} %To prove the claim we follow the arguments in the proof of \cite[Corollary 2.8]{KalPe2019}. 
Namely, for each non-zero tripotent $w\in {E_0(u)}$ ($u\perp w$) the element $u \pm w$ is a tripotent in $F_u$. Therefore $$\Delta (u) = \Delta\left(\frac12 (u+w) + \frac12 (u-w) \right) = \frac12 \Delta\left( u+w\right) + \frac12 \Delta\left(u-w \right).$$
By Lemma \ref{c Tingley antipodal thm for tripotents} (se also \cite[Corollary 2.4]{KalPe2019}) we have $- \Delta\left(u-w \right) = \Delta\left(- u+w) \right)$, and by similar arguments to those given above, but now with respect to the face $F_w$, we have $$\Delta (w) = \Delta\left(\frac12 (u+w) + \frac12 (-u+w) \right) = \frac12 \Delta\left( u+w\right) + \frac12 \Delta\left(-u+w \right)$$ $$= \frac12 \Delta\left( u+w\right) - \frac12 \Delta\left(u-w \right).$$ It then follows that \begin{equation}\label{eq Delta on the sum of orthogonal tripotents last theorem} \Delta (u) + \Delta (w) = \Delta (u + w).
\end{equation} The claim in \eqref{eq main property of Tu in last theorem} is a straight consequence of \eqref{eq Delta on the sum of orthogonal tripotents last theorem} and \eqref{eq definition of Tu i nlast theorem}.\smallskip

We continue with our argument. If $E$ is an elementary JB$^*$-triple the result follows from Theorem \ref{t elementary JBstar triples satisfy the MUP}. We can therefore assume that $E$ decomposes as the orthogonal sum of two non-zero weakly compact JB$^*$-triples $A$ and $B$. Let us pick two non-zero tripotents $u_1\in A$ and $u_2\in B$ and the corresponding linear isometries $T_{u_j}: E_0(u_j) \to Y$ given by \eqref{eq definition of Tu i nlast theorem}.\smallskip

Let us observe that $A\subseteq E_0(u_2),$ $B\subseteq E_0(e_1)$ and $E= A\oplus^{\ell_{\infty}} B$. Therefore the mapping $T: E\to Y$, $T(a+b) = T_{u_2} (a) + T_{u_1} (b)$ is a well-defined linear operator. We shall next show that \begin{equation}\label{eq T coincides with Delta final thm}
T(x) = \Delta(x), \hbox{ for all } x\in S(E).
\end{equation}

Let us fix $x\in S(E)$. By Remark 4.6 in \cite{BuChu} there exists a possible finite at most countable family $\{e_n\}$ of mutually orthogonal minimal tripotents in $E$ and $(\lambda_n)\subseteq \mathbb{R}^+$ such that $\lambda_1 =1$ and $\displaystyle x = \sum_{n\geq 1} \lambda_n e_n$. Each $e_n$ lies in $A$ or in $B$, and hence it follows from the definition of $T$ that $T(e_n) =  T_{u_2} (e_n)$ if $e_n\in A$ and $T(e_n) =  T_{u_1} (e_n)$ if $e_n\in B.$ We deduce from \eqref{eq main property of Tu in last theorem} that in any case we have $\Delta (e_n) = T (e_n)$ for all $n\geq 1$. Now we regard $x$ as an element in the face $F_{e_1}$ and we apply the properties of $T_{e_1}$ and \eqref{eq main property of Tu in last theorem} to deduce that $$\Delta (x) = \Delta \left(e_1 + \sum_{n\geq 2} \lambda_n e_n\right) = \Delta(e_1) + \sum_{n\geq 2} \lambda_n T_{e_1} (e_n) =  \Delta(e_1) + \sum_{n\geq 2} \lambda_n \Delta (e_n)$$ $$ =  T(e_1) + \sum_{n\geq 2} \lambda_n T (e_n) = T \left(e_1 + \sum_{n\geq 2} \lambda_n e_n\right) = T(x),$$ witnessing the validity of \eqref{eq T coincides with Delta final thm}.\smallskip

Finally, since $T$ is linear we derive from \eqref{eq T coincides with Delta final thm} and the hypothesis on $\Delta$ that $T$ is a surjective real linear isometry.
\end{proof}

We have previously mentioned that compact C$^*$-algebras are examples of weakly compact JB$^*$-triples. The next corollary perhaps deserves its own place.

\begin{corollary}\label{c compact C*-algebras satisfy MUP final} Suppose $\displaystyle A = \bigoplus_{i\in I}^{c_0} K(H_i)$ is a compact C$^*$-algebra, where each $H_i$ is a complex Hilbert space. Then every surjective isometry from the unit sphere of $A$ onto the unit sphere of any other real Banach space $Y$ admits a (unique) extension to a surjective real linear isometry from $A$ onto $Y$.
\end{corollary}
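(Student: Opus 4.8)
The plan is to observe that the corollary is a special case of Theorem~\ref{t weakly compact JB*-triples satisfy MUP}, so the entire argument reduces to recognizing the compact C$^*$-algebra $A$ as a weakly compact JB$^*$-triple and then quoting that theorem.

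First I would equip $A$ with the triple product defined in \eqref{eq triple product on Cstaralg}, under which every C$^*$-algebra becomes a JB$^*$-triple. Each summand $K(H_i) = K(H_i,H_i)$ is an elementary JB$^*$-triple of type~$1$, and therefore $A = \bigoplus_{i\in I}^{c_0} K(H_i)$ is precisely a $c_0$-sum of elementary JB$^*$-triples. By the characterization recalled in Section~\ref{subsec:background} (the condition~$(c)$ listed after \cite[Lemma 3.3 and Theorem 3.4]{BuChu}), this is exactly the assertion that $A$ is a weakly compact JB$^*$-triple; equivalently, one may invoke the fact that a C$^*$-algebra is compact if and only if it is weakly compact (see \cite{Yli}). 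This identification was already recorded in Remark~\ref{remark compact C* algebras satisfy strong Mankiewicz property}.

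Second, I would stress that the norm of $A$ as a C$^*$-algebra and its norm as a JB$^*$-triple coincide, so that the unit sphere $S(A)$ and the closed unit ball $\mathcal{B}_{A}$ are the same sets in both pictures. Consequently a surjective isometry $\Delta: S(A)\to S(Y)$ as in the hypothesis of the corollary is literally a surjective isometry in the sense required by Theorem~\ref{t weakly compact JB*-triples satisfy MUP}. Applying that theorem then yields a (unique) extension of $\Delta$ to a surjective real linear isometry from $A$ onto $Y$, which is the desired conclusion.

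There is no genuine obstacle here: all the difficulty has already been encapsulated in Theorem~\ref{t weakly compact JB*-triples satisfy MUP} and, behind it, in the facial and strong Mankiewicz machinery developed in the previous sections. The only point requiring a moment's care is the verification that the $c_0$-sum presentation $A=\bigoplus_{i\in I}^{c_0} K(H_i)$, supplied by the structure theory of compact C$^*$-algebras (cf. \cite[Theorem 8.2]{Alex}), indeed matches the condition characterizing weak compactness; once this is in place the corollary follows immediately.
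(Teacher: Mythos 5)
Your proposal is correct and follows exactly the route the paper intends: the corollary is stated as an immediate consequence of Theorem \ref{t weakly compact JB*-triples satisfy MUP} after observing that $A$, being a $c_0$-sum of the elementary JB$^*$-triples $K(H_i)$, is a weakly compact JB$^*$-triple. Your additional remarks on the coincidence of the C$^*$-norm and the JB$^*$-triple norm are a harmless (and correct) elaboration of the same argument.
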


\medskip\medskip

\textbf{Acknowledgements} Author partially supported by the Spanish Ministry of Science, Innovation and Universities (MICINN) and European Regional Development Fund project no. PGC2018-093332-B-I00, Programa Operativo FEDER 2014-2020 and Consejer{\'i}a de Econom{\'i}a y Conocimiento de la Junta de Andaluc{\'i}a grant number A-FQM-242-UGR18, and Junta de Andaluc\'{\i}a grant FQM375.\smallskip

\end{document}